\newcommand{\R}{\mathbb R}
\newcommand{\E}{\mathbb E}
\newtheorem{theorem}{Theorem}
\newtheorem{corollary}[theorem]{Corollary}
\newtheorem{lemma}[theorem]{Lemma}
\newtheorem{proposition}[theorem]{Proposition}
\theoremstyle{definition}
\theoremstyle{remark}
\title{Fisher-Rao geometry of Dirichlet distributions}
\author{Alice Le Brigant}
\address{SAMM 4543, Université Paris 1 Panthéon Sorbonne, Centre PMF, Paris, France.}
\email{alice.le-brigant@univ-paris1.fr}
\author{Stephen C. Preston}
\address{Department of Mathematics, Brooklyn College and CUNY Graduate Center, New York, USA.}
\email{stephen.preston@brooklyn.cuny.edu}
\author{St\'ephane Puechmorel}
\address{Ecole Nationale de l'Aviation Civile, Université de Toulouse, Toulouse, France.}
\email{stephane.puechmorel@enac.fr}
\date{}
\begin{document}

\maketitle

\begin{abstract}
In this paper, we study the geometry induced by the Fisher-Rao metric on the parameter space of Dirichlet distributions. We show that this space is a Hadamard manifold, i.e. that it is geodesically complete and has everywhere negative sectional curvature. An important consequence for applications is that the Fréchet mean of a set of Dirichlet distributions is uniquely defined in this geometry.
\end{abstract}

\section{Introduction}

The differential geometric approach to probability theory and statistics has met increasing interest in the past years, from the theoretical point of view as well as in applications. In this approach, probability distributions are seen as elements of a differentiable manifold, on which a metric structure is defined through the choice of a Riemannian metric. Two very important ones are the Wasserstein metric, central in optimal transport, and the Fisher-Rao metric (also called Fisher information metric), essential in information geometry. Unlike optimal transport, information geometry is foremost concerned with parametric families of probability distributions, and defines a Riemannian structure on the parameter space using the Fisher information matrix \cite{fisher1922}. It was Rao who showed in 1945 \cite{rao1945} that the Fisher information could be used to locally define a scalar product on the space of parameters, and interpreted as a Riemannian metric. Later on, \u{C}encov \cite{cencov1982} proved that it was the only metric invariant with respect to sufficient statistics, for families with finite sample spaces. This result has been extended more recently to non parametric distributions with infinite support \cite{ay2015, bauer2016}.

Information geometry has been used to obtain new results in statistical inference as well as gain insight on existing ones. In parameter estimation for example, Amari \cite{amari2016information} shows that conditions for consistency and efficiency of estimators can be expressed in terms of geometric conditions; in the presence of hidden variables, the famous Expectation-Maximisation (EM) algorithm can be described in an entirely geometric manner; and in order to insure invariance to diffeomorphic change of parametrization, the so-called natural gradient \cite{amari1998natural} can be used to define accurate parameter estimation algorithms \cite{ollivier2017}.

Another important use of information geometry is for the effective comparison and analysis of families of probability distributions. The geometric tools provided by the Riemannian framework, such as the geodesics, geodesic distance and intrinsic mean, have proved useful to interpolate, compare, average or perform segmentation between objects modeled by probability densities, in applications such as signal processing \cite{arnaudon2013riemannian}, image \cite{schwander2012model, angulo2014morphological} or shape analysis \cite{peter2006, srivastava2007}, to name a few. These applications rely on the specific study of the geometries of usual parametric families of distributions, which has started in the early work of Atkinson and Mitchell. In \cite{atkinson1981}, the authors study the trivial geometries of one-parameter families of distributions, the hyperbolic geometry of the univariate normal model as well as special cases of the multivariate normal model, a work that is continued by Skovgaard in \cite{skovgaard1984}. The family of gamma distributions has been studied by Lauritzen in \cite{lauritzen1987statistical}, and more recently by Arwini and Dodson in \cite{arwini2008}, who also focus on the log-normal, log-gamma, and families of bivariate distributions. Power inverse Gaussian distributions \cite{zhang2007}, location-scale models and in particular the von Mises distribution \cite{said2019}, and the generalized gamma distributions \cite{rebbah2019} have also received attention.

In this work, we are interested in Dirichlet distributions, a family of probability densities defined on the $(n-1)$-dimensional probability simplex, that is the set of vectors of $\R^n$ with non-negative components that sum up to one. The Dirichlet distribution models a random probability distribution on a finite set of size $n$. It generalizes the beta distribution, a two-parameter probability measure on $[0,1]$ used to model random variables defined on a compact interval. Beta and Dirichlet distributions are often used in Bayesian inference as conjugate priors for several discrete probability laws \cite{o1999bayesian,griffiths2002gibbs,briggs2003probabilistic}, but also come up in a wide variety of other applications, e.g. to model percentages and proportions in genomic studies \cite{yang2017beta}, distribution of words in text documents \cite{madsen2005modeling}, or for mixture models \cite{bouguila2004unsupervised}. Up to our knowledge, the information geometry of Dirichlet distributions has not yet received much attention. In \cite{calin2014}, the authors give the expression of the Fisher-Rao metric for the family of beta distributions, but nothing is said about the geodesics or the curvature.

In this paper, we give new results and properties for the geometry of Dirichlet distributions, and its sectional curvature in particular. The derived expressions depend on the trigamma function, the second derivative of the logarithm of the gamma function, however we will avoid using its properties when possible to obtain our results. Instead, we consider a more general metric written using a function $f$, for which we only make the strictly necessary assumptions. Section \ref{sec:dirichlet} gives the setup for our problem by considering the Fisher-Rao metric on the space of parameters of Dirichlet distributions. In Section \ref{sec:general}, we consider the more general metric where $f$ replaces the trigamma function, and show that it induces the geometry of a submanifold in a flat Lorentzian space. This allows us to show geodesic completeness, and that the sectional curvature is everywhere negative. Section \ref{sec:beta} focuses on the two-dimensional case, i.e. beta distributions.

\section{Fisher-Rao metric on the manifold of Dirichlet distributions}\label{sec:dirichlet}

Let $\Delta_n$ denote the $(n-1)$-dimensional probability simplex, i.e. the set of vectors in $\R^n$ with non-negative components that sum up to one
\begin{equation*}
    \Delta_n=\{q=(q_1,\hdots,q_n)\in\R^n, \,\sum_{i=1}^nq_i=1,\, q_i\geq 0, i=1,\hdots,n\}.
\end{equation*}
The family of Dirichlet distributions is a family of probability distributions on $\Delta_n$ parametrized by $n$ positive scalars $x_1,\hdots, x_n >0$ (Figure \ref{fig:dirichlet}), that admits the following probability density function with respect to the Lebesgue measure
\begin{equation*}
f_n(q|x_1,\hdots,x_n) = \frac{\Gamma(x_1+\hdots+x_n)}{\Gamma(x_1)\hdots\Gamma(x_n)} q_1^{x_1-1}\hdots q_n^{x_n-1}.
\end{equation*}
As an open subset of $\R^n$, the space of parameters $M = (\R_+^*)^n$ is a differentiable manifold and can be equipped with a Riemannian metric defined in its matrix form by the Fisher information matrix
\begin{equation*}
g_{ij}(x_1,\hdots,x_n) = -\E\left[ \frac{\partial^2}{\partial x_i\partial x_j} \log f_n(Q|x_1,\hdots,x_n)\right], \quad i,j=1,\hdots,n,
\end{equation*}
where $\mathbb E$ denotes the expectation taken with respect to $Q$, a random variable with density $f_n(\cdot|x_1,\hdots,x_n)$. The Dirichlet distributions form an exponential family and so the Fisher-Rao metric is the hessian of the log-partition function \cite{amari2016information}, namely
\begin{equation*}
g_{ij}(x_1,\hdots,x_n) = \frac{\partial^2}{\partial x_i\partial x_j} \varphi(x_1,\hdots, x_n), \quad i,j=1,\hdots,n,
\end{equation*}
where $\varphi$ is the logarithm of the normalizing factor
\begin{equation*}
\varphi (x_1,\hdots, x_n) = \sum_{i=1}^n \log \Gamma(x_i) - \log\Gamma(x_1+\hdots+x_n).
\end{equation*}
We obtain the following metric tensor.
\begin{equation}\label{fisherraodirichlet}
g_{ij}(x_1,\hdots,x_n)= \psi'(x_i)\delta_{ij} - \psi'(x_1+\hdots+x_n),
\end{equation}
where $\delta_{ij}$ is the Kronecker delta function, and $\psi$ denotes the digamma function, that is the first derivative of the logarithm of the gamma function, i.e.
\begin{equation*}
    \psi(x) = \frac{d}{dx}\log\Gamma(x).
\end{equation*}
Its derivative $\psi'$ is called the trigamma function. As noted below, the trigamma function is a function whose reciprocal is increasing, convex, and sublinear on $\mathbb{R}^+$. For slightly greater generality, and to emphasize what properties of this function are needed for our results, we will work in the sequel with a more general function $f$ on which we make only the necessary assumptions; in our special case we have $f = 1/\psi'$.

\begin{figure}
\includegraphics[trim={1.7cm 1.3cm 1.7cm 1.3cm},clip,width=0.3\linewidth,height=10em]{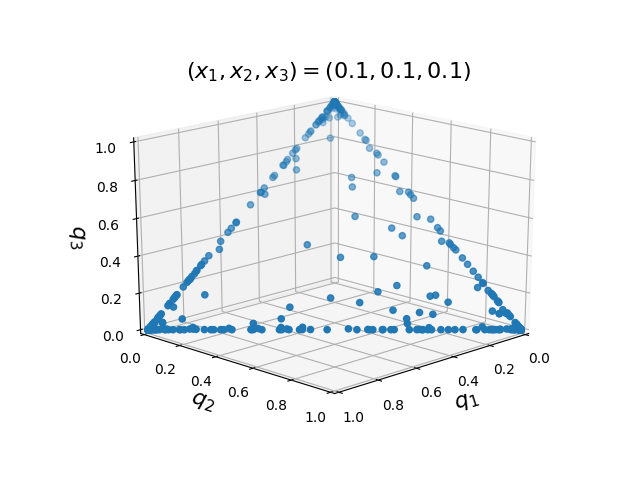}
\includegraphics[trim={1.7cm 1.3cm 1.7cm 1.3cm},clip,width=0.3\linewidth,height=10em]{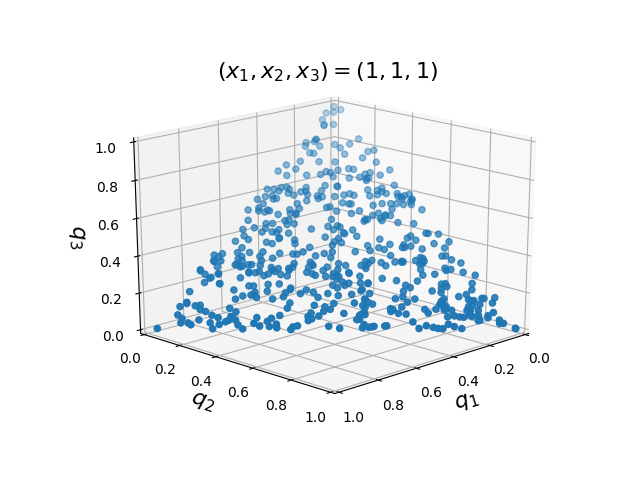}
\includegraphics[trim={1.7cm 1.3cm 1.7cm 1.3cm},clip,width=0.3\linewidth,height=10em]{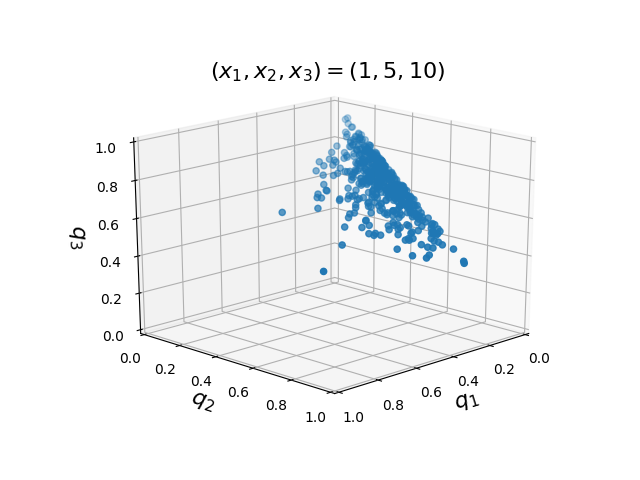}
\caption{Random samples drawn from Dirichlet distributions on the $2$-dimensional simplex $\Delta_3$ for different values of the parameters $(x_1,x_2,x_3)$.}
\label{fig:dirichlet}
\end{figure}

\section{The general framework}\label{sec:general}

\subsection{The metric}

In this section we consider a more general geometry, that admits the Fisher-Rao geometry of Dirichlet distributions as a special case. The goal is to avoid using the properties of the trigamma function when possible. For this, we consider the quadrant $M=(\R_+^*)^n$ equipped with a metric of the form
\begin{equation}\label{basicmetric}
ds^2 = \frac{dx_1^2}{f(x_1)} + \cdots + \frac{dx_n^2}{f(x_n)} - \frac{(dx_1 + \cdots + dx_n)^2}{f(x_1+\cdots + x_n)},
\end{equation}
where $f:\R_+\rightarrow \R$ is a function on which we make the following assumptions:
\begin{equation}\label{fassumptions}
f(x)\underset{x\to 0}{=}O(x^2), \quad f'(x) \underset{x\to 0}{=} O(x),\quad f(x) \underset{x\to \infty}{=} O(x^2),\quad f''>0 \text{ and } \frac{d^2}{dx^2}\left( \frac{f}{f'}\right) > 0.
\end{equation}
We retrieve the Fisher-Rao metric \eqref{fisherraodirichlet} when
\begin{equation*}
f(x)=\frac{1}{\psi'(x)}.
\end{equation*}
Notice that this choice for $f$ satisfies the conditions of \eqref{fassumptions}. Indeed, that $f(0)=f'(0)=0$ comes from the asymptotic formula $\psi'(x) \approx x^{-2}$ valid near $x=0$, since
$$ f'(0) = \lim_{x\to 0} \frac{-\psi''(x)}{\psi'(x)^2} = \lim_{x\to 0} \frac{2x^{-3}}{x^{-4}} = 0.$$
The fact that the reciprocal of the trigamma function $f(x) = \frac{1}{\psi'(x)}$ is convex comes from an argument of Trimble-Wells-Wright~\cite{trimble1989superadditive}, based on an inequality later proved in Alzer-Wells~\cite{alzer1998inequalities}. The fact that $f/f'$ is convex comes from Yang~\cite{yang2017}.

Another example of a function satisfying the conditions \eqref{fassumptions} is
\begin{equation*}\label{rationalapprox}
\tilde{f}(x) = \frac{(2x+1)x^2}{2x^2+2x+1},
\end{equation*}
a simple rational function which approximates the reciprocal of the trigamma function well, in both the small-$x$ and large-$x$ regions.

Some useful consequences of our assumptions \eqref{fassumptions} are given in the following lemma. These results are well-known, but we include the simple proofs for completeness.
\begin{lemma}\label{fconsequences}
If $f$ satisfies \eqref{fassumptions}, then we have
$f'(x)>0$ and $f(x)>0$ for all $x>0$. In addition $f$ and $f/f'$ are superadditive:
\begin{align}
f(x_1 + \cdots + x_n) &> f(x_1) + \cdots + f(x_n), \label{fsuperadditive} \\
\frac{f(x_1+\cdots + x_n)}{f'(x_1+\cdots x_n)} &> \frac{f(x_1)}{f'(x_1)} + \cdots + \frac{f(x_n)}{f'(x_n)}, \label{ffprimesuperadditive}
\end{align}
for all $x_1,\ldots, x_n>0$.
\end{lemma}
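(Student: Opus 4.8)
The plan is to establish the four conclusions in the order $f'>0$, then $f>0$, then the two superadditivity statements, since each depends on the previous ones. For $f'>0$: the assumption $f''>0$ means $f'$ is strictly increasing on $\R_+$, so it suffices to know $f'(0)\ge 0$, which follows from $f'(x)=O(x)$ near $0$. Strict monotonicity then upgrades this to $f'(x)>0$ for all $x>0$. For $f>0$: since $f(x)=O(x^2)$ near $0$ we have $f(0)=0$ (more precisely $\lim_{x\to 0^+}f(x)=0$), and then $f(x)=f(x)-f(0)=\int_0^x f'(t)\,dt>0$ because the integrand is positive on $(0,x]$.

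For the superadditivity of $f$, I would first reduce to the two-variable case $f(a+b)>f(a)+f(b)$ for $a,b>0$, from which the $n$-variable version follows by an immediate induction (writing $f(x_1+\cdots+x_n)=f((x_1+\cdots+x_{n-1})+x_n)>f(x_1+\cdots+x_{n-1})+f(x_n)$ and applying the inductive hypothesis). For the two-variable statement, fix $b>0$ and set $h(a)=f(a+b)-f(a)-f(b)$. Then $h(0)=0$ and $h'(a)=f'(a+b)-f'(a)>0$ since $f'$ is strictly increasing (here $f''>0$ is doing the work), so $h(a)>0$ for $a>0$. The superadditivity of $f/f'$ is handled identically: the hypothesis $(f/f')''>0$ makes $(f/f')'$ strictly increasing, and one also needs $(f/f')(0)=0$ — this is where the small-$x$ asymptotics matter, since $f/f'\sim x^2/(2x)=x/2\to 0$ as $x\to 0$, using $f(x)=O(x^2)$ and $f'(x)=O(x)$ together with $f'>0$ away from $0$ so the quotient is well-defined on $(0,\infty)$. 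Then the same $h(a)=(f/f')(a+b)-(f/f')(a)-(f/f')(b)$ argument applies.

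The only mildly delicate point — which I would treat carefully rather than wave away — is justifying that $f$ and $f/f'$ extend continuously to $0$ with value $0$, so that the "evaluate at $0$" steps are legitimate: $f(0)=0$ is clear from $f(x)=O(x^2)$, while for $f/f'$ one must check the quotient does not blow up as $x\to 0$, which is exactly what the joint conditions $f(x)=O(x^2)$ and $f'(x)=O(x)$ guarantee (and $f'>0$ on $(0,\infty)$ ensures $f/f'$ is defined there). Everything else is the elementary observation that a $C^2$ function with positive second derivative and vanishing value at the origin has a strictly increasing derivative, hence is itself strictly increasing and, more to the point, \emph{strictly superadditive} — a standard fact that is nonetheless worth spelling out since the strictness of \eqref{fsuperadditive} and \eqref{ffprimesuperadditive} is used later. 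I do not anticipate a genuine obstacle here; the lemma is, as the authors say, well-known, and the proof is essentially two applications of the same convexity-plus-vanishing-at-zero argument.
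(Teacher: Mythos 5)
Your proof is correct and follows essentially the same route as the paper: convexity plus vanishing at zero yields strict superadditivity (you argue via monotonicity of $f'$ on the difference $h(a)=f(a+b)-f(a)-f(b)$, the paper via the double integral of $f''$), applied first to $f$ and then to $f/f'$, with the same induction to $n$ variables. One small caveat: the asymptotic $f/f'\sim x/2$ is not literally a consequence of the one-sided bounds $f(x)=O(x^2)$ and $f'(x)=O(x)$ (the latter only bounds the denominator from above); the fact you actually need, $\lim_{x\to 0^+}f(x)/f'(x)=0$, is cleaner to get from convexity, since $f(0^+)=0$ and $f''>0$ give $f(x)\le x f'(x)$ and hence $0<f(x)/f'(x)\le x$.
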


\begin{proof}
That $f''>0$ implies $f'>0$ and thus $f>0$ for all $x>0$ is obvious. It has been known since Petrovich~\cite{petrovich1932fonctionnelle} that a convex function $f$ with $f(0)=0$ is superadditive: an easy argument in the differentiable case is that
$$ f(x+y) - f(x) - f(y) = \int_0^x \int_0^y f''(s+t) \, dt \, ds \ge 0.$$
By induction the general case \eqref{fsuperadditive} follows.
Since $\lim_{x\to 0} f(x)/f'(x) = 0$, the same argument applies to $f/f'$ to give \eqref{ffprimesuperadditive}.
\end{proof}

\subsection{Lorentzian submanifold geometry}

We now show that after a change of coordinates, $M$ can be seen as a codimension 1 submanifold of the $(n+1)$-dimensional flat Minkowski space $L^{n+1}=(\R^{n+1},ds_L^2)$, where
\begin{equation}\label{minkowskimetric}
ds_L^2=dy_1^2+\hdots+dy_n^2-dy_{n+1}^2.
\end{equation}
In the sequel, we will denote by $\langle\cdot,\cdot\rangle$ the scalar product induced by this metric.

\begin{proposition}\label{flatteningprop}
The mapping
\begin{align*}
\Phi&:M\rightarrow L^{n+1},\\
&(x_1,\hdots,x_n)\mapsto (\eta(x_1),\hdots,\eta(x_n),\eta(x_1+\hdots+x_n))
\end{align*}
where $\eta\colon \mathbb{R}_+\to \mathbb{R}$ is defined by
$$ \eta(x) = \int_1^x \frac{dr}{\sqrt{f(r)}},$$
is an isometric embedding.
\end{proposition}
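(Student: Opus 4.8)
I need to verify two things: that $\Phi$ is an immersion (injective differential) and that it pulls back $ds_L^2$ to the metric $ds^2$ of \eqref{basicmetric}; injectivity of $\Phi$ itself will then follow easily. Let me think about the pullback computation first, since that is the heart of the matter.

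Writing $y_i = \eta(x_i)$ for $i=1,\ldots,n$ and $y_{n+1} = \eta(x_1+\cdots+x_n)$, I compute the differentials. By definition of $\eta$, $\eta'(x) = 1/\sqrt{f(x)}$, so $dy_i = \frac{dx_i}{\sqrt{f(x_i)}}$ for $i \le n$, while $dy_{n+1} = \frac{d(x_1+\cdots+x_n)}{\sqrt{f(x_1+\cdots+x_n)}} = \frac{dx_1+\cdots+dx_n}{\sqrt{f(x_1+\cdots+x_n)}}$. Substituting into \eqref{minkowskimetric}:
$$\Phi^* ds_L^2 = \sum_{i=1}^n dy_i^2 - dy_{n+1}^2 = \sum_{i=1}^n \frac{dx_i^2}{f(x_i)} - \frac{(dx_1+\cdots+dx_n)^2}{f(x_1+\cdots+x_n)},$$
which is exactly \eqref{basicmetric}. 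This also shows at once that $d\Phi_x$ is injective: if $v = (v_1,\ldots,v_n)$ lies in the kernel, then $ds^2(v,v) = 0$; but a separate check is needed here because $ds^2$ is not obviously positive definite, so I should instead argue directly that the first $n$ components $dy_i = v_i/\sqrt{f(x_i)}$ already vanish only if all $v_i = 0$, giving injectivity of $d\Phi_x$ for free without worrying about degeneracy of the target metric. (This also retroactively confirms that $ds^2$ is a genuine Riemannian metric — nondegenerate — which is worth a remark.)

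For injectivity of $\Phi$ as a map, suppose $\Phi(x) = \Phi(x')$. Since $\eta$ is strictly increasing (as $\eta' = 1/\sqrt{f} > 0$ on $\mathbb{R}_+^*$ by Lemma \ref{fconsequences}), the equalities $\eta(x_i) = \eta(x_i')$ force $x_i = x_i'$ for each $i$, so $x = x'$. Finally, for $\Phi$ to be an embedding I should note that $\Phi$ is a homeomorphism onto its image: it is a continuous injection, and its image is the graph-like set $\{(\eta(x_1),\ldots,\eta(x_n),\eta(x_1+\cdots+x_n))\}$, on which the inverse $(y_1,\ldots,y_{n+1}) \mapsto (\eta^{-1}(y_1),\ldots,\eta^{-1}(y_n))$ is continuous; here I use that $\eta\colon \mathbb{R}_+^* \to \eta(\mathbb{R}_+^*)$ is a homeomorphism onto its (open) image.

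The only genuinely delicate point is that $\eta$ is well-defined and finite on all of $\mathbb{R}_+^*$ — in particular that the integral $\int_1^x dr/\sqrt{f(r)}$ converges as $x \to 0^+$ so that $\Phi$ extends continuously where a coordinate approaches $0$, and that it need not converge as $x\to\infty$ (that is fine, $\eta$ may be unbounded). Near $r = 0$ the assumption $f(r) = O(r^2)$ from \eqref{fassumptions} would make $1/\sqrt{f(r)}$ of order $1/r$, whose integral diverges; so I expect the point of the hypothesis is actually that $\eta(x) \to -\infty$ as $x\to 0^+$, which is harmless for the embedding property on the open quadrant $M = (\mathbb{R}_+^*)^n$ but is exactly what will later force geodesic completeness (points never reach the boundary in finite arclength). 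I will state clearly that $\eta$ is a smooth, strictly increasing diffeomorphism from $(0,\infty)$ onto an open interval, which is all that the embedding claim requires; the asymptotic behavior at the endpoints is recorded here but exploited in the subsequent completeness argument. Thus the main obstacle is not computational but bookkeeping: being careful that $\Phi$ maps into $L^{n+1}$ (finiteness of $\eta$ away from $0$) and that "isometric embedding" is verified in all three of its parts (immersion, metric pullback, homeomorphism onto image).
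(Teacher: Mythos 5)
Your proposal is correct, and its core is identical to the paper's proof: the same pullback computation ($dy_i = dx_i/\sqrt{f(x_i)}$, $dy_{n+1} = (dx_1+\cdots+dx_n)/\sqrt{f(x_1+\cdots+x_n)}$) and the same immersion argument from $\eta'>0$. Where you genuinely differ is the topological part of ``embedding.'' The paper pins down the image of $\eta$: from $f(x)=O(x^2)$ as $x\to 0$ it gets $\int_0^1 dr/\sqrt{f(r)}=\infty$, and from $f(x)=O(x^2)$ as $x\to\infty$ it gets $\int_K^\infty dr/\sqrt{f(r)}=\infty$, so $\eta$ is a bijection of $\mathbb{R}_+^*$ onto all of $\mathbb{R}$, from which the homeomorphism-onto-image property is immediate. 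You instead leave the image as an unspecified open interval and verify the homeomorphism property directly through the continuous left inverse $(y_1,\ldots,y_{n+1})\mapsto(\eta^{-1}(y_1),\ldots,\eta^{-1}(y_n))$; this graph-type argument needs only strict monotonicity and finiteness of $\eta$ on $(0,\infty)$, so for this proposition your route is more economical and does not use the asymptotic hypotheses in \eqref{fassumptions} at all. The trade-off is that the surjectivity of $\eta$ onto $\mathbb{R}$ is not decoration in the paper: it is exactly what makes $\xi=\eta^{-1}$ globally defined, which is used later in the proof of Theorem \ref{geodcompletethm} to show that $\Phi(M)$ is closed in $L^{n+1}$ as the zero set of $(y_1,\ldots,y_{n+1})\mapsto\xi(y_1)+\cdots+\xi(y_n)-\xi(y_{n+1})$, and hence that $\Phi$ is proper. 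So your version is a valid, slightly leaner proof of the stated proposition, but if adopted one would still need to record the bijectivity of $\eta$ onto $\mathbb{R}$ (or supply an alternative properness argument) before the completeness theorem.
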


\begin{proof}
Since $f(x) \approx \tfrac{1}{2} f''(0) x^2$ for $x\approx 0$, we see that
$$ \int_0^1 \frac{dr}{\sqrt{f(r)}} = \infty,$$
so that the image of $\eta$ must include all negative reals. Therefore $\eta$ maps $\mathbb{R}_+$ bijectively to $(-\infty, N)$ for some $N\in (0,\infty]$. The behavior of $f$ at infinity assumed in \eqref{fassumptions} implies that $f(x) \le C x^2$ for all $x\ge K$, for some $K>0$, $C>0$, which in turns leads to
$$ \int_K^{\infty} \frac{dx}{\sqrt{f(x)}} = \infty.$$
Therefore $\eta$ maps bijectively $\mathbb{R}_+$ to $\mathbb{R}$, and $\Phi$ is a homeomorphism onto its image. Since $\eta'(x)>0$ for all $x$, it is also an immersion. Finally, if $(y_1,\hdots,y_{n+1})=\Phi(x_1,\hdots,x_n)$,
$$dy_i^2 = \eta'(x_i)^2 dx_i^2 = \frac{dx_i^2}{f(x_i)}, i=1,\hdots,n, \quad dy_{n+1}^2 = \frac{(dx_1+\hdots+dx_n)^2}{f(x_1+\hdots+x_n)},$$
and $\Phi$ is isometric.
\end{proof}

\begin{proposition}\label{positivemetric}
$S=\Phi(M)$ is a codimension $1$ submanifold of $L^{n+1}$ given by the graph of
\begin{equation}\label{submanifoldeq}
y_{n+1}=\eta(\xi(y_1)+\hdots+\xi(y_n)), \quad y_i>0,
\end{equation}
where $\xi=\eta^{-1}$. On this submanifold the metric is positive-definite and thus Riemannian. A basis of tangent vectors of $T_yS$ is defined by
\begin{equation}\label{tangentvecs}
e_i = \frac{\partial}{\partial y_i} + \sqrt{\frac{f\circ\xi(y_i)}{f\circ\xi(y_{n+1})}}\frac{\partial}{\partial y_{n+1}}, \quad i=1,\hdots,n,
\end{equation}
\end{proposition}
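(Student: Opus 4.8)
The plan is to verify each of the three assertions in Proposition~\ref{positivemetric} in turn, reusing the structure already established in Proposition~\ref{flatteningprop}. First, to see that $S=\Phi(M)$ is the graph described by \eqref{submanifoldeq}, I would start from the definition of $\Phi$: if $(y_1,\dots,y_{n+1})=\Phi(x_1,\dots,x_n)$ then $y_i=\eta(x_i)$ for $i\le n$ and $y_{n+1}=\eta(x_1+\cdots+x_n)$. Since Proposition~\ref{flatteningprop} shows $\eta\colon\R_+\to\R$ is a bijection with inverse $\xi=\eta^{-1}$, we recover $x_i=\xi(y_i)$, and substituting into the last coordinate gives exactly $y_{n+1}=\eta(\xi(y_1)+\cdots+\xi(y_n))$. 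The constraints $y_i>0$ come from $x_i>0$ and the fact that $\eta$ is increasing with $\eta(x)>0$ iff $x>1$ --- wait, more carefully, the domain constraint on the $y_i$ is that each $\xi(y_i)>0$, which holds automatically since $\xi$ maps $\R$ onto $\R_+$; so in fact $(y_1,\dots,y_n)$ ranges over all of $\R^n$ and the ``$y_i>0$'' in \eqref{submanifoldeq} should be read as the statement of which sign the authors are tracking. I would phrase this cleanly by just noting $\Phi$ is a bijection onto the graph of the smooth function $F(y_1,\dots,y_n)=\eta(\xi(y_1)+\cdots+\xi(y_n))$, which is a codimension $1$ embedded submanifold because a graph of a smooth function always is.

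Next, for the tangent space: the graph parametrization $(y_1,\dots,y_n)\mapsto(y_1,\dots,y_n,F(y_1,\dots,y_n))$ has coordinate tangent vectors $\partial_{y_i}+\frac{\partial F}{\partial y_i}\,\partial_{y_{n+1}}$. I would compute $\frac{\partial F}{\partial y_i}=\eta'(\xi(y_1)+\cdots+\xi(y_n))\cdot\xi'(y_i)$. Using $\eta'(x)=1/\sqrt{f(x)}$ and $\xi'(y)=1/\eta'(\xi(y))=\sqrt{f(\xi(y))}$, together with $\xi(y_1)+\cdots+\xi(y_n)=\xi(y_{n+1})$ on $S$, this becomes $\frac{\partial F}{\partial y_i}=\frac{1}{\sqrt{f(\xi(y_{n+1}))}}\cdot\sqrt{f(\xi(y_i))}=\sqrt{f\circ\xi(y_i)\big/ f\circ\xi(y_{n+1})}$, which is exactly the coefficient in \eqref{tangentvecs}. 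So $\{e_1,\dots,e_n\}$ is the image of the standard basis under the graph parametrization and hence a basis of $T_yS$.

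Finally, for positive-definiteness of the induced metric: I would compute the Gram matrix $\langle e_i,e_j\rangle$ in the Lorentzian metric \eqref{minkowskimetric}. Writing $a_i=\sqrt{f\circ\xi(y_i)}$ and $b=\sqrt{f\circ\xi(y_{n+1})}$, one gets $\langle e_i,e_j\rangle=\delta_{ij}-a_ia_j/b^2$. Equivalently, up to the positive rescaling $e_i\mapsto e_i$ this is the matrix $I-\frac{1}{b^2}aa^{\mathsf T}$ where $a=(a_1,\dots,a_n)^{\mathsf T}$. For a vector $v=\sum v_ie_i\ne 0$ one computes $\langle v,v\rangle=|v|^2-\frac{1}{b^2}(a\cdot v)^2=\sum v_i^2-\frac{(\sum a_iv_i)^2}{b^2}$; by Cauchy--Schwarz $(\sum a_iv_i)^2\le(\sum a_i^2)(\sum v_i^2)$, so $\langle v,v\rangle\ge\bigl(1-\tfrac{\sum a_i^2}{b^2}\bigr)\sum v_i^2$. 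The key input is that $\sum_i a_i^2=\sum_i f(\xi(y_i))<f\bigl(\sum_i\xi(y_i)\bigr)=f(\xi(y_{n+1}))=b^2$, which is precisely the superadditivity \eqref{fsuperadditive} of $f$ from Lemma~\ref{fconsequences} (with $x_i=\xi(y_i)>0$). Hence $1-\sum a_i^2/b^2>0$ and $\langle v,v\rangle>0$ for $v\ne0$, so the induced metric is positive-definite and Riemannian. The only mild subtlety --- the ``main obstacle,'' though it is not a serious one --- is making sure the superadditivity inequality is applied with strict inequality and to genuinely positive arguments, which is fine since $\xi$ takes values in $\R_+^*$; everything else is a direct unwinding of the definitions of $\eta$ and $\xi$.
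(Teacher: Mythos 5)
Your proof is correct and follows essentially the same route as the paper: identify $S=\Phi(M)$ as the graph of $F(y_1,\dots,y_n)=\eta(\xi(y_1)+\cdots+\xi(y_n))$, read off the tangent basis \eqref{tangentvecs} from the graph parametrization using $\xi'=\sqrt{f\circ\xi}$, and deduce positive-definiteness of $\delta_{ij}-W_iW_j$ from the superadditivity \eqref{fsuperadditive} of $f$. The only cosmetic differences are that you inline the Cauchy--Schwarz argument where the paper invokes its appendix Lemma~\ref{positivetransposelemma} (which is the same computation with $A=I$, $c=1$), and your observation that the $y_i$ in fact range over all of $\R$ (since $\eta$ maps $\R_+$ onto $\R$) correctly flags the superfluous ``$y_i>0$'' in \eqref{submanifoldeq}.
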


\begin{proof}
Let $\gamma(u)=(y_1(u),\hdots,y_{n+1}(u))$ be a parametrized curve in $S$. Then its coordinates verify the following relations
\begin{equation*}
\begin{aligned}
y_{n+1}&=\eta(\xi(y_1)+\hdots+\xi(y_n)),\\
y_{n+1}' &=\eta'(\xi(y_1)+\hdots+\xi(y_n))(\xi'(y_1)y_1'+\hdots+\xi'(y_n)y_n'),
\end{aligned}
\end{equation*}
and so, since $\xi'(x)=\sqrt{f(\xi(x))}$,
\begin{equation*}
\begin{aligned}
\gamma'(u)&=\sum_{i=1}^ny_i'(u)\frac{\partial}{\partial y_i}+\eta'(\xi(y_{n+1}(u)))(\xi'(y_1(u))y_1'(u)+\hdots+\xi'(y_n(u))y_n'(u))\frac{\partial}{\partial y_{n+1}}\\
&= \sum_{i=1}^ny_i'(u)\left(\frac{\partial}{\partial y_i} + \frac{\sqrt{f\circ\xi(y_i(u))}}{\sqrt{f\circ\xi(y_{n+1}(u))}} \frac{\partial}{\partial y_{n+1}}\right),
\end{aligned}
\end{equation*}
yielding \eqref{tangentvecs} as basis tangent vectors. 
The metric components on $S$ take the form
\begin{equation*}\label{metrictrans}
g_{ij} = \langle e_i,e_j\rangle = \delta_{ij} - W_iW_j, \qquad \text{or}\quad g = I - WW^T,
\end{equation*}
where $\langle\cdot,\cdot\rangle$ denotes the flat Minkowskian metric \eqref{minkowskimetric} and $W_i = \sqrt{f(\xi(y_i))/f(\xi(y_{n+1}))}$ for $i=1,\hdots,n$.
Applying Lemma \ref{positivetransposelemma} of the appendix gives the result upon computing
\begin{equation*}\label{Wnorm}
W^TW = \frac{\sum_{i=1}^n f\big(\xi(y_i)\big)}{f\big(\sum_{i=1}^n \xi(y_i)\big)} < 1,
\end{equation*}
by superadditivity of $f$, as in \eqref{fsuperadditive}.
\end{proof}

In other words, the metric \eqref{basicmetric} is the restriction of the flat Lorentzian metric
\begin{equation*}
ds^2 = \frac{dx_1^2}{f(x_1)} + \cdots + \frac{dx_n^2}{f(x_n)} - \frac{dt^2}{f(t)}
\end{equation*}
to the hyperplane $t = x_1 + \cdots + x_n$. In the sequel, we will use this Lorentzian submanifold geometry to study the sectional curvature and geodesic completeness of $M$. We state the results in the original coordinate system of $M$ when possible, using the following notations for any $y=(y_1,\hdots,y_{n+1})\in S$:
\begin{equation}\label{xcoord}
x_i=\xi(y_i), \,\, i=1,\hdots,n, \quad t=x_1+\hdots+x_n=\xi(y_{n+1}).
\end{equation}

\subsection{Negative sectional curvature}

The goal of this section is to prove that the sectional curvature of $M$ is everywhere negative. We start by computing the shape operator.
\begin{proposition}\label{secondff}
The shape operator of $S=\Phi(M)$ has the following components in the basis \eqref{tangentvecs} of tangent vectors
\begin{equation*}
\langle \Sigma(e_i),e_j\rangle = -\frac{1}{2\sqrt{f(t)-\sum_{\ell=1}^nf(x_\ell)}}\left( f'(x_i)\delta_{ij} - \frac{f'(t)}{f(t)}\sqrt{f(x_i)f(x_j)}\right).
\end{equation*}
\end{proposition}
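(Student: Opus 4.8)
The plan is to compute the shape operator $\Sigma$ of $S \subset L^{n+1}$ directly from its definition as the tangential covariant derivative of the unit normal, using the fact that the ambient space is flat. First I would produce the normal vector field $\nu$ to $S$. Since $S$ is the graph \eqref{submanifoldeq}, a normal vector is obtained by writing $S$ as a level set $F(y) = \eta(\xi(y_1)+\cdots+\xi(y_n)) - y_{n+1} = 0$ and taking the Minkowski gradient of $F$; concretely the Euclidean gradient has components $\eta'(\xi(y_1)+\cdots+\xi(y_n))\,\xi'(y_j)$ for $j=1,\dots,n$ and $-1$ for the last slot, and raising the index with $ds_L^2$ flips the sign of the last component back to $+1$. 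Using $\xi'(y) = \sqrt{f(\xi(y))}$ and $\eta'(s) = 1/\sqrt{f(s)}$, together with the abbreviations $x_i = \xi(y_i)$, $t = \xi(y_{n+1})$ from \eqref{xcoord}, this unnormalized normal becomes $\tilde\nu = \sum_{j=1}^n \sqrt{f(x_j)/f(t)}\,\partial_{y_j} + \partial_{y_{n+1}}$. Its Minkowski square-norm is $\langle \tilde\nu,\tilde\nu\rangle = \sum_j f(x_j)/f(t) - 1 = -\bigl(f(t)-\sum_\ell f(x_\ell)\bigr)/f(t)$, which is negative by the superadditivity \eqref{fsuperadditive} — confirming $\nu$ is timelike and that $S$ is spacelike, consistent with Proposition \ref{positivemetric}. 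Hence the unit normal is $\nu = \tilde\nu / \sqrt{\,(f(t)-\sum_\ell f(x_\ell))/f(t)\,}$ (up to the sign convention that makes the stated formula come out).

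Next I would compute $\langle \Sigma(e_i), e_j\rangle = -\langle \bar\nabla_{e_i}\nu, e_j\rangle = \langle \nu, \bar\nabla_{e_i} e_j\rangle$, where $\bar\nabla$ is the flat connection of $L^{n+1}$, i.e. ordinary coordinate differentiation. Since $e_j = \partial_{y_j} + W_j \partial_{y_{n+1}}$ with $W_j = \sqrt{f(x_j)/f(t)}$ and the coordinate fields $\partial_{y_k}$ are parallel, we get $\bar\nabla_{e_i} e_j = \bigl(e_i W_j\bigr)\,\partial_{y_{n+1}}$, so only the $\partial_{y_{n+1}}$-component of $\nu$ matters and $\langle \Sigma(e_i),e_j\rangle = -\,(e_i W_j)\,/\sqrt{\,(f(t)-\sum_\ell f(x_\ell))/f(t)\,}$, the minus sign coming from $\langle \partial_{y_{n+1}},\partial_{y_{n+1}}\rangle = -1$. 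It then remains to differentiate $W_j$ along $e_i$. Writing $W_j = \sqrt{f(x_j)}\cdot f(t)^{-1/2}$ and recalling $x_k = \xi(y_k)$ with $\xi' = \sqrt{f\circ\xi}$, one has $\partial_{y_k} x_j = \delta_{jk}\sqrt{f(x_j)}$ and $\partial_{y_k} t = \partial_{y_k} \xi(y_{n+1})$; but here one must be careful — $t$ is a function of the point of $S$, and along $S$ we have $y_{n+1} = \eta(x_1+\cdots+x_n)$, so on $S$, $t = x_1+\cdots+x_n$ and $e_i$ acts on it as $e_i t = \sqrt{f(x_i)}$. A short computation then gives $e_i W_j = \tfrac12\sqrt{f(t)}^{-1}\bigl(f'(x_j)\delta_{ij}/\sqrt{f(x_j)}\bigr)\cdot\sqrt{f(x_i)} - \tfrac12 \sqrt{f(x_j)}\,f(t)^{-3/2} f'(t)\sqrt{f(x_i)}$, which simplifies to $\tfrac{1}{2\sqrt{f(t)}}\bigl(f'(x_i)\delta_{ij} - \tfrac{f'(t)}{f(t)}\sqrt{f(x_i)f(x_j)}\bigr)$. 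Substituting into the formula above, the factor $\sqrt{f(t)}$ in the denominator of $e_iW_j$ combines with the $\sqrt{f(t)}$ inside the normalization $\sqrt{(f(t)-\sum f(x_\ell))/f(t)}$ to leave exactly $\sqrt{f(t)-\sum_\ell f(x_\ell)}$ in the denominator, yielding the claimed expression.

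The main obstacle is bookkeeping rather than conceptual: one must consistently treat $t$ (equivalently $y_{n+1}$) as a dependent variable on $S$ when differentiating $W_j$, and one must pin down the sign conventions — the choice of orientation of $\nu$, the sign in $\Sigma = -\bar\nabla\nu$, and the index-raising sign from the Lorentzian metric — so that the three minus signs collapse to the single overall minus sign in the statement. A clean way to avoid the dependent-variable pitfall is to do all differentiation intrinsically along the $e_i$: since each $e_i$ is by construction tangent to $S$ (it is the pushforward of $\partial/\partial x_i$ under $\Phi$ up to the chain rule, cf. the curve computation in the proof of Proposition \ref{positivemetric}), applying $e_i$ to any function automatically respects the constraint, and the relation $e_i t = \sqrt{f(x_i)}$ follows from $t = \sum_\ell x_\ell$ and $e_i x_\ell = \delta_{i\ell}\sqrt{f(x_\ell)}$. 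With that in hand the computation is a routine application of the product and chain rules, and no properties of $f$ beyond positivity of $f$ and $f'$ (Lemma \ref{fconsequences}) and the superadditivity \eqref{fsuperadditive} guaranteeing $f(t) - \sum_\ell f(x_\ell) > 0$ are needed.
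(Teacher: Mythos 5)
Your proposal is correct and follows essentially the same route as the paper: the same unnormalized normal $N=\sum_j\sqrt{f(x_j)/f(t)}\,\partial_{y_j}+\partial_{y_{n+1}}$ obtained from the graph description, the same computation $\langle N,N\rangle=\bigl(\sum_\ell f(x_\ell)-f(t)\bigr)/f(t)<0$ via superadditivity, and the same second-derivative computation in the end. The only cosmetic difference is that you evaluate $\langle\nu,\bar\nabla_{e_i}e_j\rangle$ by differentiating the frame coefficients $W_j$, whereas the paper differentiates the normal and reads off $\partial_i\partial_jF$; by the Leibniz rule applied to $\langle\nu,e_j\rangle=0$ these are the same calculation, and your formula for $e_iW_j$ coincides with the paper's $\partial_i\partial_jF$.
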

\begin{proof}
We first observe that the basis vectors \eqref{tangentvecs} can be expressed in coordinates \eqref{xcoord} as
\begin{equation}\label{tangentvecsx}
e_i = \frac{\partial}{\partial y_i} + \sqrt{\frac{f(x_i)}{f(t)}}\frac{\partial}{\partial y_{n+1}}, \quad i=1,\hdots,n.
\end{equation}
Since $S$ can be obtained as the graph of $F(y_1,\hdots,y_n)=\eta(\xi(y_1),\hdots,\xi(y_n))$, a normal vector field to $S$ at $y$ is given by
\begin{equation}\label{normalfield}
N=\sum_{i=1}^n\frac{\partial F}{\partial y_i}\frac{\partial}{\partial y_i}+\frac{\partial}{\partial y_{n+1}}=\sum_{i=1}^n\sqrt{\frac{f(x_i)}{f(t)}}\frac{\partial}{\partial y_i}+\frac{\partial}{\partial y_{n+1}},
\end{equation}
which yields a timelike vector since
\begin{equation}\label{normalnorm}
\langle N,N \rangle = \frac{1}{f(t)}\left(f(x_1)+\hdots+f(x_n) - f(t)\right)<0,
\end{equation}
by superadditivity of $f$. Since $\langle N, e_i\rangle=0$, the shape operator is then given by
\begin{equation}\label{shapeop}
\langle \Sigma(e_i),e_j\rangle =-\langle \nabla_{e_i}\left(\frac{N}{\sqrt{-\langle N,N\rangle}}\right), e_j\rangle=-\frac{\langle \nabla_{e_i} N, e_j\rangle}{\sqrt{-\langle N,N \rangle}},
\end{equation}
$\nabla$ is the flat connection of the Minkowski space. Denoting $\partial_i=\partial/\partial y_i$, we get from \eqref{tangentvecsx}, \eqref{normalfield} and the flatness of $\nabla$,
\begin{equation*}
\nabla_{e_i}N = \nabla_{\partial_i }N +\frac{f(x_i)}{f(t)}\nabla_{\partial_{n+1}}N = \sum_{j=1}^n \partial_i\partial_j F \partial_j.
\end{equation*}
Inserting this last equation along with \eqref{normalnorm} into \eqref{shapeop} yields
\begin{equation*}
\langle \Sigma(e_i),e_j\rangle =-\sqrt{\frac{f(t)}{f(t)-\sum_{\ell=1}^nf(x_\ell)}}\partial_i\partial_j F.
\end{equation*}
Straightforward computations give
\begin{align*}
\partial_iF&=\eta'(t)\xi'(y_i)=\sqrt{f(x_i)/f(t)},\\
\partial_i\partial_jF &= \eta''(t)\xi'(y_i)\xi'(y_j) + \eta'(t)\xi''(y_i)\delta_{ij} = \frac{1}{2\sqrt{f(t)}}\left(\frac{-f'(t)}{f(t)}\sqrt{f(x_i)f(x_j)} + f'(x_i)\delta_{ij}\right),
\end{align*}
and the result follows after simplification.
\end{proof}

\begin{corollary}\label{posdefsecondff}
The second fundamental form given by Proposition \ref{secondff} is positive-definite.
\end{corollary}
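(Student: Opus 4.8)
The plan is to show that the quadratic form $\langle\Sigma(e_i),e_j\rangle v^iv^j$ is strictly positive for every nonzero $v=(v_1,\dots,v_n)$. By Proposition~\ref{secondff}, up to the positive scalar factor $\bigl(2\sqrt{f(t)-\sum_\ell f(x_\ell)}\bigr)^{-1}$, which is well-defined and positive by the superadditivity \eqref{fsuperadditive}, this amounts to showing
$$ \sum_{i=1}^n f'(x_i)v_i^2 \;>\; \frac{f'(t)}{f(t)}\Bigl(\sum_{i=1}^n \sqrt{f(x_i)}\,v_i\Bigr)^2 .$$
First I would handle the right-hand side with Cauchy-Schwarz: writing $\sqrt{f(x_i)}\,v_i = \frac{\sqrt{f(x_i)}}{\sqrt{f'(x_i)}}\cdot\sqrt{f'(x_i)}\,v_i$, we get
$$ \Bigl(\sum_{i=1}^n \sqrt{f(x_i)}\,v_i\Bigr)^2 \;\le\; \Bigl(\sum_{i=1}^n \frac{f(x_i)}{f'(x_i)}\Bigr)\Bigl(\sum_{i=1}^n f'(x_i)v_i^2\Bigr).$$
So it suffices to prove $\dfrac{f'(t)}{f(t)}\sum_{i=1}^n \dfrac{f(x_i)}{f'(x_i)} < 1$, i.e.
$$ \sum_{i=1}^n \frac{f(x_i)}{f'(x_i)} \;<\; \frac{f(t)}{f'(t)} \;=\; \frac{f(x_1+\cdots+x_n)}{f'(x_1+\cdots+x_n)},$$
which is exactly the superadditivity \eqref{ffprimesuperadditive} of $f/f'$ established in Lemma~\ref{fconsequences}. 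Since $f'>0$ and $f>0$ on $\mathbb{R}_+$ (also from Lemma~\ref{fconsequences}), all the quantities in sight are positive and the chain of inequalities is legitimate.

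It remains to check that the inequality is \emph{strict} whenever $v\ne 0$. If all the $v_i$ are such that the vectors $\bigl(\sqrt{f(x_i)/f'(x_i)}\bigr)_i$ and $\bigl(\sqrt{f'(x_i)}\,v_i\bigr)_i$ are proportional, Cauchy-Schwarz is an equality, but then we still gain strictness from the strict inequality in \eqref{ffprimesuperadditive}, provided the left-hand side $\sum_i f'(x_i)v_i^2$ is nonzero — which holds since $f'(x_i)>0$ and $v\ne 0$. In the complementary case the Cauchy-Schwarz step is already strict. Either way the quadratic form is positive-definite.

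I do not expect a serious obstacle here: the statement is a clean consequence of the two superadditivity properties, combined with a single application of Cauchy-Schwarz. The only mild subtlety is bookkeeping the strictness, making sure that at least one of the two inequalities used is strict for every nonzero $v$; this is immediate once one notes that $\sum_i f'(x_i)v_i^2>0$ always. No properties of $f$ beyond those in \eqref{fassumptions} (and their consequences in Lemma~\ref{fconsequences}) are needed.
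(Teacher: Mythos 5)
Your argument is correct and is essentially the paper's proof: the paper writes the form as $-\tfrac12 k(D-cVV^T)$ and invokes Lemma~\ref{positivetransposelemma}, whose hypothesis $cV^TD^{-1}V<1$ is exactly your reduction $\tfrac{f'(t)}{f(t)}\sum_i f(x_i)/f'(x_i)<1$, settled by the superadditivity \eqref{ffprimesuperadditive}. You have merely inlined the Cauchy--Schwarz argument that proves that appendix lemma (and, like the paper, you work up to the overall sign coming from the choice of unit normal), so there is no substantive difference.
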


\begin{proof}
This follows from Lemma \ref{positivetransposelemma} and the decomposition of the matrix $\Sigma$ with components $\Sigma_{ij}=\langle \Sigma(e_i),e_j\rangle$ as
\begin{equation*}
\Sigma=-\frac{1}{2}k(D-cVV^T),
\end{equation*}
where $D=\mathrm{diag}(d_1,\hdots,d_n)$ is a diagonal matrix, $V=(v_i)_{1\leq i\leq n}$ is a column vector and $c$ and $k$ are constants, defined for $i=1,\hdots,n$ by
\begin{equation}\label{decompsecondff}
d_i=f'(x_i), \quad v_i=\sqrt{f(x_i)}, \quad k = \frac{1}{\sqrt{f(t)-\sum_{\ell=1}^nf(x_\ell)}}, \quad c = \frac{f'(t)}{f(t)}.
\end{equation}
Recalling that $f>0$ and $f'>0$ by Lemma \ref{fconsequences}, we see that the matrix $D$ and constant $c$ are positive. There remains to verify that
\begin{equation*}
cV^TD^{-1}V = \frac{f'(t)}{f(t)}\sum_{i=1}^n\frac{f(x_i)}{f'(x_i)}<1,
\end{equation*}
by the superadditivity property \eqref{ffprimesuperadditive}.
\end{proof}

We can now show our main result.
\begin{theorem}\label{negseccurvthm}
The sectional curvature of the Riemannian metric \eqref{basicmetric} is negative on $M$.
\end{theorem}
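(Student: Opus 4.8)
The plan is to use the Gauss equation, which for a codimension-one submanifold of a flat ambient space expresses the curvature of $S$ entirely in terms of the shape operator. Since the ambient space $L^{n+1}$ is flat, for tangent vectors $u,v\in T_yS$ the sectional curvature of the plane they span is governed by
$$ \langle R(u,v)v,u\rangle = \epsilon\left( \langle\Sigma(u),u\rangle\langle\Sigma(v),v\rangle - \langle\Sigma(u),v\rangle^2\right),$$
where $\epsilon = \mathrm{sign}\langle N,N\rangle = -1$ because, by \eqref{normalnorm}, the normal field $N$ is timelike. Thus the curvature has the \emph{opposite} sign to the usual Riemannian Gauss equation: a positive-definite second fundamental form now forces the numerator $\langle\Sigma(u),u\rangle\langle\Sigma(v),v\rangle - \langle\Sigma(u),v\rangle^2$ to be positive (by Cauchy--Schwarz applied to the positive-definite bilinear form $\Sigma$ from Corollary \ref{posdefsecondff}), and hence $\langle R(u,v)v,u\rangle < 0$ for every pair of linearly independent $u,v$. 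Dividing by the positive quantity $\langle u,u\rangle\langle v,v\rangle - \langle u,v\rangle^2$ (positive since the induced metric on $S$ is Riemannian by Proposition \ref{positivemetric}) gives negative sectional curvature.

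The steps, in order, would be: (i) recall the Gauss equation for a nondegenerate hypersurface in a flat pseudo-Riemannian space, being careful to track the sign $\epsilon$ coming from the causal character of the unit normal; (ii) invoke \eqref{normalnorm} to fix $\epsilon=-1$; (iii) invoke Corollary \ref{posdefsecondff} to conclude that $\langle\Sigma(u),u\rangle\langle\Sigma(v),v\rangle - \langle\Sigma(u),v\rangle^2 > 0$ whenever $u,v$ are independent, which is exactly the statement that the Gram determinant of $u,v$ with respect to the positive-definite form $\Sigma$ is positive; (iv) invoke Proposition \ref{positivemetric} to conclude the corresponding Gram determinant with respect to $g$ is positive; (v) combine to get that the sectional curvature, being a ratio of a negative quantity over a positive one, is strictly negative.

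The main technical point to get right is the sign in the Gauss equation when the ambient metric is Lorentzian and the normal is timelike. In the Riemannian setting the Gauss equation reads $\langle R(u,v)v,u\rangle = \langle\Sigma(u),u\rangle\langle\Sigma(v),v\rangle - \langle\Sigma(u),v\rangle^2$ and a convex hypersurface has positive curvature; here the extra factor $\langle N,N\rangle^{-1} = -1$ (after normalizing $N$ to be a unit timelike vector, as was already done implicitly in \eqref{shapeop}) flips this, which is precisely why a space that ``looks convex'' ends up being negatively curved. I would state this carefully, perhaps deriving it in one line from the flatness of $\nabla$ and the Weingarten relation $\nabla_u N = -\Sigma(u)$ together with the Gauss formula $\nabla_u v = \nabla^S_u v + \frac{\langle\Sigma(u),v\rangle}{\langle N,N\rangle} N$. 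Everything else is a direct appeal to the two preceding results, so the only real obstacle is bookkeeping of signs; no further analytic input about $f$ is needed beyond what Lemma \ref{fconsequences}, Proposition \ref{positivemetric}, and Corollary \ref{posdefsecondff} already provide.
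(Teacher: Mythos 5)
Your argument is correct and is essentially the paper's proof: the paper invokes O'Neill's Gauss-equation corollary for a hypersurface with timelike normal in flat Lorentzian space (which supplies exactly your sign $\epsilon=-1$), then applies Cauchy--Schwarz to the positive-definite shape operator from Corollary \ref{posdefsecondff}, noting that equality only occurs when $u,v$ are dependent, where the denominator also vanishes. Deriving the signed Gauss equation yourself instead of citing it is the only difference.
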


\begin{proof}
We use a result from O'Neill~\cite[Chapter 4, Corollary 20]{o1983semi}, which states that if the normal vector field $N$ of a hypersurface $M$ in a flat Lorentzian manifold $L$ is timelike, then the sectional curvature of the submanifold is given by
\begin{equation}\label{gausscodazzi}
K(U,V) = - \frac{\langle \Sigma(U), U\rangle \langle \Sigma(V), V\rangle - \langle \Sigma(U),V\rangle^2}{\langle U,U\rangle \langle V,V\rangle - \langle U,V\rangle^2},
\end{equation}
where $U$ and $V$ are tangent to the submanifold and $\Sigma$ is the shape operator. The result now follows by the Cauchy-Schwarz inequality: since $\Sigma$ is a positive-definite symmetric matrix, we know that $\langle \Sigma(U),U\rangle \langle \Sigma(V),V\rangle \ge \langle \Sigma(U),V\rangle^2$ with equality iff $V$ is a multiple of $U$, but in that case the denominator vanishes as well. So the sectional curvature must be strictly negative.
\end{proof}

We now give more specifically the formula of the sectional curvature of the planes generated by the basis tangent vectors \eqref{tangentvecs}.
\begin{proposition}\label{seccurv}
The sectional curvature along the axes defined by \eqref{tangentvecs} is given by
\begin{equation*}
K(e_i,e_j)=\frac{f(x_i)f'(x_j)f'(t)+f'(x_i)f(x_j)f'(t)-f'(x_i)f'(x_j)f(t)}{4(f(t)-\sum_{\ell=1}^nf(x_\ell))(f(t)-f(x_i)-f(x_j))}.
\end{equation*}
\end{proposition}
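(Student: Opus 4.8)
The plan is to specialize the general Gauss–Codazzi formula \eqref{gausscodazzi} from Theorem \ref{negseccurvthm} to the pair $U=e_i$, $V=e_j$ for fixed $i\neq j$, and then substitute the explicit shape-operator components from Proposition \ref{secondff} and the metric components from Proposition \ref{positivemetric}. Both numerator and denominator will factor, and the proof is essentially bookkeeping; the main obstacle is only keeping the algebra organized so the claimed quotient emerges cleanly.

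First I would record the four scalars that feed \eqref{gausscodazzi}. From the formula $g_{ij}=\delta_{ij}-W_iW_j$ with $W_i=\sqrt{f(x_i)/f(t)}$ we get, for $i\neq j$,
\begin{equation*}
\langle e_i,e_i\rangle = 1 - \frac{f(x_i)}{f(t)} = \frac{f(t)-f(x_i)}{f(t)}, \qquad \langle e_i,e_j\rangle = -\frac{\sqrt{f(x_i)f(x_j)}}{f(t)},
\end{equation*}
so the denominator of \eqref{gausscodazzi} is
\begin{equation*}
\langle e_i,e_i\rangle\langle e_j,e_j\rangle - \langle e_i,e_j\rangle^2 = \frac{(f(t)-f(x_i))(f(t)-f(x_j)) - f(x_i)f(x_j)}{f(t)^2} = \frac{f(t)\big(f(t)-f(x_i)-f(x_j)\big)}{f(t)^2},
\end{equation*}
which simplifies to $\big(f(t)-f(x_i)-f(x_j)\big)/f(t)$.

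Next I would do the analogous computation for the shape operator. Writing $A = \big(2\sqrt{f(t)-\sum_\ell f(x_\ell)}\big)^{-1}$, Proposition \ref{secondff} gives $\langle\Sigma(e_i),e_i\rangle = -A\big(f'(x_i) - \tfrac{f'(t)}{f(t)} f(x_i)\big)$ and $\langle\Sigma(e_i),e_j\rangle = A\,\tfrac{f'(t)}{f(t)}\sqrt{f(x_i)f(x_j)}$ for $i\neq j$. Hence
\begin{equation*}
\langle\Sigma(e_i),e_i\rangle\langle\Sigma(e_j),e_j\rangle - \langle\Sigma(e_i),e_j\rangle^2 = A^2\left[\Big(f'(x_i)-\tfrac{f'(t)}{f(t)}f(x_i)\Big)\Big(f'(x_j)-\tfrac{f'(t)}{f(t)}f(x_j)\Big) - \tfrac{f'(t)^2}{f(t)^2}f(x_i)f(x_j)\right].
\end{equation*}
Expanding the bracket, the two $\tfrac{f'(t)^2}{f(t)^2}f(x_i)f(x_j)$ terms cancel, leaving $f'(x_i)f'(x_j) - \tfrac{f'(t)}{f(t)}\big(f'(x_i)f(x_j)+f(x_i)f'(x_j)\big)$; multiplying through by $f(t)$ and using $A^2 = \big(4(f(t)-\sum_\ell f(x_\ell))\big)^{-1}$ turns the numerator of \eqref{gausscodazzi} into
\begin{equation*}
-\frac{f'(x_i)f'(x_j)f(t) - f'(t)\big(f'(x_i)f(x_j)+f(x_i)f'(x_j)\big)}{4 f(t)\big(f(t)-\sum_\ell f(x_\ell)\big)}.
\end{equation*}
Finally, forming the quotient in \eqref{gausscodazzi}, the factor $f(t)$ cancels between numerator and denominator, the overall sign combines with the leading minus in \eqref{gausscodazzi} and the minus just displayed to give an overall plus, and I am left with exactly
\begin{equation*}
K(e_i,e_j)=\frac{f(x_i)f'(x_j)f'(t)+f'(x_i)f(x_j)f'(t)-f'(x_i)f'(x_j)f(t)}{4\big(f(t)-\sum_{\ell=1}^n f(x_\ell)\big)\big(f(t)-f(x_i)-f(x_j)\big)},
\end{equation*}
as claimed. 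As a sanity check one notes that the denominator is positive by superadditivity of $f$ (Lemma \ref{fconsequences}), consistent with $K<0$ from Theorem \ref{negseccurvthm}, since the numerator $= f'(x_i)f'(x_j)\big(\tfrac{f(x_i)}{f'(x_i)}f'(t)+\tfrac{f(x_j)}{f'(x_j)}f'(t)-f(t)\big)$ is negative by \eqref{ffprimesuperadditive}.
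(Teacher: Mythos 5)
Your proposal is correct and takes essentially the same route as the paper: substitute the metric components from Proposition \ref{positivemetric} and the shape-operator components from Proposition \ref{secondff} into O'Neill's formula \eqref{gausscodazzi} and simplify. The only differences are cosmetic --- you carry out the algebra explicitly (your sign for $\langle e_i,e_j\rangle$ is the correct one, and in any case only its square enters), while the paper states the ingredients and leaves the simplification to the reader.
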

\begin{proof}
This follows from applying formula \eqref{gausscodazzi} for the sectional curvature of a hypersurface in a flat Lorentzian manifold, with
\begin{equation*}
\langle e_i,e_i\rangle=1-\frac{f(x_i)}{f(t)}, \quad \langle e_i, e_j\rangle=\sqrt{\frac{f(x_i)f(x_j)}{f(t)^2}}, \quad i\neq j,
\end{equation*}
and $\langle \Sigma(e_i),e_j\rangle$ given by Proposition \ref{secondff}.
\end{proof}

Finally, we state a result about the eigenvalues of the shape operator, which will be useful to show geodesic completeness in the next section.
\begin{proposition}\label{boundedprincurv}
The principal curvatures at any point in $S=\Phi(M)$ are bounded.
\end{proposition}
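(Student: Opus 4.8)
The plan is to exploit the rank-one structure that both the second fundamental form and the metric already display. Working in the basis $\{e_i\}$ of \eqref{tangentvecs}, Proposition~\ref{secondff} together with the decomposition \eqref{decompsecondff} gives $\Sigma=-\tfrac12 k\,(D-cVV^T)$ for the matrix of the second fundamental form, while Proposition~\ref{positivemetric} gives $g=I-\tfrac{1}{f(t)}VV^T$ for the metric, since $W_i=V_i/\sqrt{f(t)}$. The principal curvatures are the roots $\kappa$ of $\det(\Sigma-\kappa g)=0$, and since
\[
\Sigma-\kappa g=-\Bigl(\tfrac12 kD+\kappa I\Bigr)+\frac{\tfrac12 kf'(t)+\kappa}{f(t)}\,VV^T
\]
is a rank-one perturbation of a diagonal matrix, the matrix determinant lemma turns $\det(\Sigma-\kappa g)=0$ into the secular equation
\[
\frac{f(t)}{\tfrac12 kf'(t)+\kappa}=\sum_{i=1}^n\frac{f(x_i)}{\tfrac12 kf'(x_i)+\kappa},
\]
and the substitution $\kappa=\tfrac12 k\mu$ reduces this to $\dfrac{f(t)}{f'(t)+\mu}=\displaystyle\sum_{i=1}^n\dfrac{f(x_i)}{f'(x_i)+\mu}$. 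Clearing denominators gives a degree-$n$ polynomial in $\mu$ whose leading coefficient is $f(t)-\sum_i f(x_i)\neq 0$ by superadditivity (Lemma~\ref{fconsequences}); since $g$ is positive-definite and $\Sigma$ symmetric, its $n$ roots $\mu_1,\dots,\mu_n$ are real, and the principal curvatures are $\kappa_i=\tfrac12 k\,\mu_i$ with $k=(f(t)-\sum_\ell f(x_\ell))^{-1/2}$.

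It then remains to bound the $\kappa_i$. By Theorem~\ref{negseccurvthm} the sectional curvature is negative, so evaluating \eqref{gausscodazzi} along orthonormal principal directions gives $-\kappa_i\kappa_j<0$ for $i\neq j$; hence all the $\kappa_i$ have one common sign, and none is zero because $\Sigma$ is definite (Corollary~\ref{posdefsecondff}). Therefore $\sum_i|\kappa_i|=\bigl|\sum_i\kappa_i\bigr|=\bigl|\mathrm{tr}(g^{-1}\Sigma)\bigr|$, so it suffices to bound the trace of the shape operator. By Vieta's formulas $\sum_i\kappa_i$ is $-\tfrac12 k$ times an explicit polynomial in $f(x_i),f'(x_i),f(t),f'(t)$ divided by $f(t)-\sum_i f(x_i)$; the point is then to show that this numerator is $O\bigl((f(t)-\sum_\ell f(x_\ell))^{3/2}\bigr)$, so that the two negative powers of $f(t)-\sum_\ell f(x_\ell)$ cancel. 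On compact subsets of $M$ this is automatic; the real content is the uniform estimate as one approaches the boundary of $M$, where one should combine the integral identities underlying \eqref{fsuperadditive}--\eqref{ffprimesuperadditive}, the convexity of $f$ and of $f/f'$, and the growth bounds $f(x)=O(x^2)$, $f'(x)=O(x)$ near $0$ and $f(x)=O(x^2)$ near $\infty$ from \eqref{fassumptions}.

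The hard part is precisely this boundary behavior: when some $x_i\to 0$ or $x_i\to\infty$ the normal $N$ of \eqref{normalfield} becomes asymptotically null by \eqref{normalnorm}, so $k\to\infty$ and the induced metric degenerates, and the cancellations among the lower coefficients of the secular polynomial must be tracked carefully against this blow-up; the rational approximation $\tilde f$ and the Dirichlet case $f=1/\psi'$ are convenient test cases for checking that the estimate is sharp. Even if a single uniform constant is not obtained this way, the pointwise statement needed for the geodesic-completeness argument of the next section is secured directly: at each $y\in S$ the metric is positive-definite (Proposition~\ref{positivemetric}) and $\Sigma$ is a finite symmetric matrix (Proposition~\ref{secondff}), so $g^{-1}\Sigma$ has $n$ finite real eigenvalues, for which the secular equation above provides explicit bounds in terms of the coordinates of $y$.
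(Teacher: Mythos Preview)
Your proposal does not prove the proposition. The statement---as used in the subsequent geodesic-completeness argument via \cite[Theorem~6]{harris1988closed}---requires a \emph{uniform} bound on the principal curvatures over all of $S$, not merely that at each fixed point the eigenvalues of $g^{-1}\Sigma$ are finite. Your final paragraph retreats to exactly this trivial pointwise statement (``$g^{-1}\Sigma$ has $n$ finite real eigenvalues''), and the claim that this is ``the pointwise statement needed for the geodesic-completeness argument of the next section'' is incorrect: Harris's theorem requires bounded principal curvatures in the uniform sense.

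The secular-equation setup and the reduction to bounding $\lvert\mathrm{tr}(g^{-1}\Sigma)\rvert$ are fine, but you then correctly identify that the whole content lies in the boundary estimate---showing the numerator is $O\bigl((f(t)-\sum_\ell f(x_\ell))^{3/2}\bigr)$---and then simply do not carry it out. Saying that ``the cancellations\ldots must be tracked carefully'' is a description of the remaining work, not a proof. (Incidentally, the assertion that $k\to\infty$ whenever some $x_i\to\infty$ is also not generally true; for $f=1/\psi'$ the quantity $f(t)-\sum_\ell f(x_\ell)$ stays bounded away from zero if at least two coordinates remain bounded below.)

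The paper's proof avoids tracking cancellations in Vieta's formulas by a direct two-case argument. First, it shows that when at least $n-1$ of the $x_i$ tend to $0$ (with $\tau=x_1+\cdots+x_{n-1}\to 0$), every entry $\Sigma_{ij}$ is $O(\sqrt{\tau})$, so the principal curvatures go to zero; this uses only the Taylor behavior $f(x)=O(x^2)$, $f'(x)=O(x)$ near $0$. This yields a $\delta>0$ such that the curvatures are bounded on the set where at least $n-1$ coordinates are below $\delta$. Second, on the complement at least two coordinates exceed $\delta$, so monotonicity of $(x_1,\dots,x_n)\mapsto f(t)-\sum_\ell f(x_\ell)$ gives $k\le (f(2\delta)-2f(\delta))^{-1/2}$; combined with the interlacing bound $\lambda_n\le k\,d_n$ for eigenvalues of a diagonal-plus-rank-one matrix (Golub) and the boundedness of $f'$, this finishes the uniform estimate. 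The missing piece in your argument is precisely this case split and the resulting control of $k$, which you replaced by an unproven cancellation claim.
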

\begin{proof}
The principal curvatures at a given point $(y_1,\hdots,y_{n+1})=\Phi(x_1,\hdots,x_n)\in S$ are the eigenvalues of the shape operator
\begin{equation*}
\Sigma=-\frac{1}{2}k(D-cVV^T),
\end{equation*}
where $D$, $V$, $c$ and $k$ are defined by \eqref{decompsecondff}. Without loss of generality, we assume that the $n$-tuple $(x_1,\hdots,x_n)$ is ordered. Let us first show that when at least $n-1$ variables go to zero, i.e. $x_i\to 0$ for $i=1,\hdots,n-1$ with the previous assumption, the principal curvatures go to zero. Let $\tau=x_1+\hdots+x_{n-1}$, then $t=x_n+\tau$ and
\begin{equation*}
f(t)-f(x_1)-\hdots -f(x_n)\underset{\tau\to 0}{\sim} \tau f'(x_n)
\end{equation*}
since $f$ has limit zero in zero, and so
\begin{equation*}
k \underset{\tau\to 0}{\sim} \frac{1}{\sqrt{\tau f'(x_n)}}.
\end{equation*}
Using the fact that when $\tau\to 0$, recalling assumptions \eqref{fassumptions},
\begin{equation*}
f(x_i)=O(x_i^2), \quad f'(x_i) = O(x_i), \quad x_i = O(\tau), \quad  i=1,\hdots,n-1,
\end{equation*}
we see that the diagonal terms of $D-cVV^T$ behave as
\begin{align*}
&f'(x_i) -\frac{f'(t)}{f(t)}f(x_i) = O(\tau), \quad i=1,\hdots,n-1,\\
&f'(x_n) -\frac{f'(t)}{f(t)}f(x_n) =\frac{f'(x_n)f(x_n+\tau) -f'(x_n+\tau)f(x_n)}{f(x_n+\tau)} \underset{\tau\to 0}{\sim} \tau\left(f''(x_n) + \frac{f'(x_n)^2}{f(x_n)}\right),
\end{align*}
while the antidiagonal terms verify
\begin{align*}
&-\frac{f'(t)}{f(t)}\sqrt{f(x_i)f(x_j)} = O(\tau^2), \quad 1\leq i,j\leq n-1,\\
&-\frac{f'(t)}{f(t)}\sqrt{f(x_i)f(x_n)} = O(\tau).
\end{align*}
Finally, we obtain that 
\begin{equation*}
\Sigma_{ij}=\langle \Sigma(e_i),e_j\rangle \underset{\tau\to 0}{=} O(\sqrt{\tau}), \quad 1\leq i,j \leq n,
\end{equation*}
and so the principal curvatures go to zero when $\tau\to 0$. Therefore there exists $\delta>0$ such that, at any point $(x_1,\hdots,x_n)$ belonging to the set
\begin{equation*}
\mathcal D_\delta=\{(x_1,\hdots,x_n)\in (\R_+^*)^n, x_i<\delta \text{ for at least } n-1 \text{ indices }i\in\{1,\hdots,n\}\},
\end{equation*}
the principal curvatures are upper bounded by, say, $1$. Now let us consider an $n$-tuple $(x_1,\hdots,x_n)\notin\mathcal D_\delta$, ordered as before. Then the diagonal elements of $D$ are ordered as well since $f'$ is increasing, and the ordered eigenvalues of $k(D-cVV^T)$ verify
\begin{equation*}
0\leq \lambda_1 \leq \hdots \leq \lambda_n \leq kd_{n},
\end{equation*}
where the lower bound comes from the positive-definiteness shown in Corollary \ref{posdefsecondff}, and the upper bound comes from \cite{golub1973some}.  Since $d_n=f'(x_n)$ and $f'$ is increasing and upper bounded by $\underset{x\to\infty}{\lim}f'(x)=1$, we have that $d_n\leq 1$. Since the function
\begin{equation*}
(x_1,\hdots,x_n)\mapsto f(x_1+\hdots+x_n)-f(x_1)-\hdots-f(x_n)
\end{equation*}
is increasing in all of its variables, it is larger than its limit as the first $n-2$ variables go to zero, and since at least $x_{n-1}>\delta$ and $x_n>\delta$, we obtain
\begin{equation*}
k =\frac{1}{\sqrt{f(t)-f(x_1)-\hdots -f(x_n)}}\leq \frac{1}{\sqrt{f(2\delta)-2f(\delta)}},
\end{equation*}
and the principal curvatures are again bounded.
\end{proof}

\subsection{Geodesics and geodesic completeness}

The geodesics of $M$ for the metric \eqref{basicmetric} are parametrized curves $u\mapsto(x_1(u),\hdots,x_n(u))$ solution of the standard second-order ODEs
\begin{equation*}
\ddot x_k + \sum_{1\leq i,j\leq n}\Gamma_{ij}^k\dot x_i\dot x_j=0, \quad k=1,\hdots,n,
\end{equation*}
whose coefficients can be computed using the following result.
\begin{proposition}\label{prop:christoffel}
The Christoffel symbols for metric \eqref{basicmetric} are given by
\begin{align*}
    \Gamma_{ij}^k = \frac{1}{2}\left[\frac{f(x_k)}{f(t)-\sum_{\ell=1}^n f(x_\ell)}\left(g(t)-g(x_j)\delta_{ij}\right) - g(x_k)\delta_{ij}\delta_{jk}\right],
\end{align*}
where $t=x_1+\hdots+x_n$ and $g(x) = f'(x)/f(x)$, while $\delta$ denotes the Kronecker delta function.
\end{proposition}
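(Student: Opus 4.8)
The plan is to compute the Christoffel symbols directly from the defining formula $\Gamma_{ij}^k = \tfrac12 \sum_\ell g^{k\ell}\big(\partial_i g_{j\ell} + \partial_j g_{i\ell} - \partial_\ell g_{ij}\big)$, exploiting the very simple structure of the metric \eqref{basicmetric}. Writing $g_{ij} = \delta_{ij}/f(x_i) - 1/f(t)$ with $t = x_1+\cdots+x_n$, the partial derivatives are immediate: $\partial_k g_{ij} = -\delta_{ij}\delta_{jk} f'(x_i)/f(x_i)^2 + f'(t)/f(t)^2$, since the only $x_i$-dependence in the diagonal term is when $i=j=k$, and $\partial_k t = 1$ always. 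This reduces the bracketed sum $\partial_i g_{j\ell} + \partial_j g_{i\ell} - \partial_\ell g_{ij}$ to a short combination of terms involving $f'/f^2$ evaluated at the $x$'s and at $t$; the $f'(t)/f(t)^2$ pieces from the first two terms add and the one from the third subtracts, leaving a single such term, while the diagonal Kronecker-delta contributions survive only in a few index configurations. So assembling the lower-index quantity $\Gamma_{ij,\ell} := \tfrac12(\partial_i g_{j\ell} + \partial_j g_{i\ell} - \partial_\ell g_{ij})$ is a routine but careful bookkeeping exercise.

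The second ingredient is the inverse metric. Here I would invoke the Sherman–Morrison formula: since $g = \mathrm{diag}(1/f(x_i)) - \tfrac{1}{f(t)} \mathbf{1}\mathbf{1}^T$ is a rank-one update of a diagonal matrix, its inverse is $g^{-1} = D^{-1} + \dfrac{D^{-1}\mathbf{1}\mathbf{1}^T D^{-1}}{f(t) - \mathbf{1}^T D^{-1}\mathbf{1}}$ with $D = \mathrm{diag}(1/f(x_i))$, so $D^{-1} = \mathrm{diag}(f(x_i))$ and $\mathbf{1}^T D^{-1}\mathbf{1} = \sum_\ell f(x_\ell)$. The denominator $f(t) - \sum_\ell f(x_\ell)$ is exactly the quantity appearing in the statement (and is positive by superadditivity, Lemma \ref{fconsequences}), so $g^{k\ell} = f(x_k)\delta_{k\ell} + \dfrac{f(x_k)f(x_\ell)}{f(t) - \sum_\ell f(x_\ell)}$. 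One can double-check this against the ambient Lorentzian picture of Proposition \ref{positivemetric}, where $g = I - WW^T$ with $W^TW < 1$, giving the same Sherman–Morrison structure.

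The final step is the contraction $\Gamma_{ij}^k = \sum_\ell g^{k\ell}\Gamma_{ij,\ell}$. This is where the main work lies: one multiplies the rank-one-plus-diagonal inverse against the few-term lower-index symbol and collects. I expect the $f'(t)/f(t)^2$ term in $\Gamma_{ij,\ell}$, being independent of $\ell$, to contract with $\sum_\ell g^{k\ell} = f(x_k) + f(x_k)\sum_\ell f(x_\ell)/(f(t)-\sum f(x_\ell)) = f(x_k)f(t)/(f(t)-\sum_\ell f(x_\ell))$, producing the $f(x_k)f'(t)/\big(f(t)(f(t)-\sum_\ell f(x_\ell))\big)$ contribution — which with $g(t) = f'(t)/f(t)$ becomes the $\tfrac12 \cdot f(x_k)/(f(t)-\sum f(x_\ell)) \cdot g(t)$ term in the claimed formula. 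The diagonal-delta terms of $\Gamma_{ij,\ell}$ (supported on $\ell$ equal to $i$ or $j$) contract against both pieces of $g^{k\ell}$ and, after using $f'(x)/f(x)^2 \cdot f(x) = g(x)$, should reorganize into the $-\tfrac12 g(x_j)\delta_{ij} \cdot f(x_k)/(f(t)-\sum f(x_\ell))$ and $-\tfrac12 g(x_k)\delta_{ij}\delta_{jk}$ terms. The main obstacle is purely organizational — keeping the $\delta_{ij}$, $\delta_{jk}$, $\delta_{ik}$ patterns straight through the contraction and verifying that the several contributions of the diagonal terms that look like they should produce extra non-$\delta_{ij}$ pieces in fact cancel (they must, since $\Gamma_{ij}^k$ is symmetric in $i,j$ and the off-diagonal structure is constrained), so that the answer collapses to the compact three-term expression stated. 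A sanity check in the case $n=1$ (where the metric degenerates) or against the known gamma-distribution Christoffel symbols would confirm the signs and normalization.
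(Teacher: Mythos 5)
Your proposal is correct and follows essentially the same route as the paper: compute the first-kind symbols $\Gamma_{ijk}=\tfrac12\bigl(\tfrac{f'(t)}{f(t)^2}-\tfrac{f'(x_k)}{f(x_k)^2}\delta_{ik}\delta_{jk}\bigr)$ from the simple structure of $g_{ij}$, invert the metric via Sherman--Morrison, and contract using the line-sum identity $\sum_\ell g^{k\ell}=f(x_k)f(t)/\bigl(f(t)-\sum_\ell f(x_\ell)\bigr)$, exactly as in the paper's proof. The bookkeeping you defer does close up as you predict, since the three delta contributions in the first-kind symbol collapse to a single term supported on $i=j=\ell$.
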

\begin{proof}
The Christoffel symbols of the second kind $\Gamma_{ij}^k$ can be obtained from the Christoffel symbols of the first kind $\Gamma_{ijk}$ and the coefficients $g^{ij}$ of the inverse of the metric matrix using the formula
\begin{equation*}
\Gamma_{ij}^k = \Gamma_{ijl} g^{kl},
\end{equation*}
where we have used the Einstein summation convention. It is easy to see that the Christoffel symbols of the first kind are given by
\begin{equation*}
\Gamma_{ijk} = \frac{1}{2}\left(\frac{f'(t)}{f(t)^2}-\frac{f'(x_k)}{f(x_k)^2}\delta_{ik}\delta_{jk}\right).
\end{equation*}
Applying the Sherman-Morrison formula, we obtain that the inverse of the metric matrix
\begin{equation*}
g(x_1,\hdots,x_n) = \text{diag}\left(\frac{1}{f(x_1)},\hdots,\frac{1}{f(x_n)}\right) - \frac{1}{f(t)} J,
\end{equation*}
where $J$ denotes the $n$-by-$n$ matrix with all entries equal to one, is given by
\begin{equation}\label{invmetric}
g(x_1,\hdots,x_n)^{-1}= \text{diag}(f(x_1),\hdots,f(x_n)^{-1}) + \frac{1}{f(t)- \sum_{\ell=1}^nf(x_\ell)}[f(x_i)f(x_j)]_{1\leq i,j\leq n}.
\end{equation}
Noticing that the sum of all the elements of the $k$th line (or column) of the inverse of the metric matrix is given by
\begin{equation}\label{sumline}
\sum_{\ell=1}^ng^{k\ell}= f(x_\ell)+\frac{\sum_{\ell=1}^nf(x_k)f(x_\ell)}{f(t)- \sum_{\ell=1}^nf(x_\ell)}=\frac{f(x_k)f(t)}{f(t)- \sum_{\ell=1}^nf(x_\ell)},
\end{equation}
we obtain
\begin{equation*}
\Gamma_{ij}^k = \frac{1}{2}\sum_{\ell=1}^n\left(\frac{f'(t)}{f(t)^2}-\frac{f'(x_\ell)}{f(x_\ell)^2}\delta_{ij}\delta_{j\ell}\right)g^{k\ell}=\frac{1}{2}\frac{f'(t)}{f(t)^2}\sum_{\ell=1}^ng^{k\ell}-\frac{1}{2}\delta_{ij}\frac{f'(x_j)}{f(x_j)^2}g^{kj}.
\end{equation*}
Inserting \eqref{sumline} and the general term of the inverse matrix \eqref{invmetric} in the above yields
\begin{equation*}
\Gamma_{ij}^k=\frac{1}{2}\frac{f'(t)}{f(t)^2}\frac{f(x_k)f(t)}{f(t)- \sum_{\ell=1}^nf(x_\ell)} -\frac{1}{2}\delta_{ij}\frac{f'(x_j)}{f(x_j)^2}\left(f(x_k)\delta_{kj} + \frac{f(x_k)f(x_j)}{f(t)-\sum_{\ell=1}^nf(x_\ell)}\right)
\end{equation*}
and the result follows.
\end{proof}

Now, using the result of Proposition \ref{boundedprincurv} and a theorem from \cite{harris1988closed}, we can show that $M$ is geodesically complete.
\begin{theorem}\label{geodcompletethm}
$M$ equipped with the Riemannian metric \eqref{basicmetric} is geodesically complete.
\end{theorem}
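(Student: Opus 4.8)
The plan is to deduce geodesic completeness from the embedding $\Phi\colon M\to L^{n+1}$ of Proposition \ref{flatteningprop} together with the boundedness of the principal curvatures from Proposition \ref{boundedprincurv}, invoking the criterion of Harris~\cite{harris1988closed}. That reference gives a condition for a complete spacelike hypersurface (or one isometric to a complete Riemannian manifold) in a Lorentzian ambient space to be geodesically complete, or, in the form we want, a condition under which a Riemannian hypersurface of a flat Lorentzian space is itself complete; the hypothesis is precisely that the second fundamental form (equivalently the shape operator, since the normal is timelike of controlled length) be uniformly bounded along the hypersurface, possibly together with properness of the embedding. So the first step is to state the relevant theorem from \cite{harris1988closed} in the form applicable here: a hypersurface $S$ in $L^{n+1}$ whose image is closed in $L^{n+1}$ and whose shape operator is bounded is geodesically complete in the induced metric.

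Next I would check the two hypotheses. The boundedness of the shape operator is exactly Proposition \ref{boundedprincurv}: the principal curvatures at every point of $S$ are bounded by a single constant, which we read off from the two cases treated there (the region $\mathcal D_\delta$, where they are bounded by $1$, and its complement, where $kd_n\le (f(2\delta)-2f(\delta))^{-1/2}$). For the closedness/properness of the embedding, I would argue that $\Phi(M)=S$ is closed in $L^{n+1}$: by Proposition \ref{positivemetric}, $S$ is the graph over the open orthant $\{y_1>0,\dots,y_n>0\}$ of the smooth function $y_{n+1}=\eta(\xi(y_1)+\cdots+\xi(y_n))$, and since $\eta$ maps $\mathbb R_+$ bijectively onto all of $\mathbb R$ (shown in the proof of Proposition \ref{flatteningprop}), as any $y_i\to 0^+$ we have $\xi(y_i)\to 0^+$ but $\eta(\xi(y_1)+\cdots)\to$ a finite limit only if the remaining $\xi(y_j)$ stay bounded away from $0$; more carefully, any sequence in $S$ converging in $L^{n+1}$ to a point $\bar y$ must have $\bar y_i>0$ for each $i\le n$, because $\eta(\xi(y_i)+\cdots)\to -\infty$ whenever all coordinates $\to 0$, so no boundary point of the orthant is a limit — hence $\bar y\in S$ and $S$ is closed. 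Equivalently one notes $\Phi$ is a homeomorphism onto its image (already remarked in Proposition \ref{flatteningprop}) and that image is closed.

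With both hypotheses verified, Harris's theorem gives that $(S,g|_S)$ is geodesically complete; since $\Phi$ is an isometry of $(M,\eqref{basicmetric})$ onto $(S,g|_S)$ by Proposition \ref{flatteningprop}, $M$ is geodesically complete, which is the claim. I would also remark that combining this with Theorem \ref{negseccurvthm} shows $M$ is a Hadamard manifold, hence (via Cartan--Hadamard and the existence/uniqueness of Fréchet means on such spaces) recovers the uniqueness of the Fréchet mean advertised in the abstract.

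The main obstacle I anticipate is matching the precise hypotheses of the cited theorem: \cite{harris1988closed} is stated for spacelike hypersurfaces in Lorentzian manifolds, and one must be careful that ``the induced metric is Riemannian and complete-as-a-metric-space'' is not what we are assuming (that would be circular) — rather the usable version is the one concluding completeness from closedness of the image plus bounded second fundamental form in an ambient space that is itself complete (flat $L^{n+1}$ is). A secondary subtlety is that $S$ is a timelike-normal (spacelike) hypersurface, so one should confirm the sign conventions and that ``bounded principal curvatures'' in \cite{harris1988closed} means bounded with respect to the Riemannian norm on $TS$, which is exactly what Proposition \ref{boundedprincurv} provides. Once the statement is pinned down the argument is short.
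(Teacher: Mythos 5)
Your overall route is exactly the paper's: invoke Harris \cite{harris1988closed} (Theorem 1: a closed isometric embedding into a complete ambient space is proper; Theorem 6: a proper spacelike hypersurface of Minkowski space with bounded principal curvatures has complete induced metric), with the curvature bound supplied by Proposition \ref{boundedprincurv} and the closedness of $S=\Phi(M)$ in $L^{n+1}$ checked directly. Structurally nothing is missing relative to the paper.

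The one step you argue independently --- closedness of $S$ --- is garbled as written. You treat $S$ as a graph over the open orthant $\{y_1>0,\dots,y_n>0\}$ and try to exclude limit points on its boundary, asserting that ``as any $y_i\to 0^+$ we have $\xi(y_i)\to 0^+$''. That is false: $\xi=\eta^{-1}$ satisfies $\xi(0)=1$ (since $\eta(1)=0$), and $\xi(y_i)\to 0^+$ only as $y_i\to-\infty$. In fact no boundary analysis is needed: since $\eta$ maps $\R_+$ bijectively onto all of $\R$ (proof of Proposition \ref{flatteningprop}), in the $y$-coordinates $S$ is the graph of the continuous function $(y_1,\dots,y_n)\mapsto \eta(\xi(y_1)+\cdots+\xi(y_n))$ defined on \emph{all} of $\R^n$ (the condition ``$y_i>0$'' in \eqref{submanifoldeq} should be read as $x_i=\xi(y_i)>0$, which is automatic), hence closed; equivalently, as the paper argues, $S$ is the preimage of $\{0\}$ under the continuous map $(y_1,\dots,y_{n+1})\mapsto \xi(y_1)+\cdots+\xi(y_n)-\xi(y_{n+1})$ on $\R^{n+1}$. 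Note also that your fallback remark (``$\Phi$ is a homeomorphism onto its image and that image is closed'') assumes the very point at issue. With the closedness argument repaired in either of these ways, your proof coincides with the paper's.
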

\begin{proof}
The image of $M$ by $\Phi$ is a hypersurface of the $(n+1)$-Minkowski space $L^{n+1}$. Moreover, $\Phi$ is an embedding and it is closed since $\Phi(M)$ is a closed subset of $L^{n+1}$ as preimage of the singleton $\{0\}$ by the continuous map $(y_1,\hdots,y_{n+1})\mapsto\xi(y_1)+\hdots+\xi(y_n)-\xi(y_{n+1})$. Therefore $\Phi$ is proper \cite[Theorem 1]{harris1988closed}. Then, \cite[Theorem 6]{harris1988closed} allows us to conclude that since $\Phi$ has bounded principal curvatures by Proposition \ref{boundedprincurv}, $M$ equipped with the pullback \eqref{basicmetric} of the Minkowski metric by $\Phi$ is complete.
\end{proof}

\subsection{Uniqueness of the Fréchet mean} 

Since $M$ is simply connected, we deduce from Theorems \ref{negseccurvthm} and \ref{geodcompletethm} the following.
\begin{corollary}
$M$ equipped with the Riemannian metric \eqref{basicmetric} is a Hadamard manifold.
\end{corollary}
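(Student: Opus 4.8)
The plan is to verify directly the defining properties of a Hadamard manifold, namely that $M$ is complete, simply connected, and of non-positive sectional curvature; all three ingredients have essentially already been assembled in the preceding sections, so the proof is a short piece of bookkeeping.

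First I would observe that $M=(\R_+^*)^n$ is an open convex subset of $\R^n$, hence contractible: the straight-line homotopy $(x,s)\mapsto (1-s)x + s(1,\ldots,1)$ deformation retracts $M$ onto the point $(1,\ldots,1)$. In particular $M$ is simply connected, as already noted in the remark preceding the statement.

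Next, Theorem \ref{geodcompletethm} asserts that $M$ equipped with the metric \eqref{basicmetric} is geodesically complete, and by the Hopf--Rinow theorem this is equivalent to completeness of the induced length-metric space. Finally, Theorem \ref{negseccurvthm} gives that the sectional curvature of \eqref{basicmetric} is everywhere negative, a fortiori non-positive. Combining these three facts yields precisely the definition of a Hadamard manifold; by the Cartan--Hadamard theorem $M$ is moreover diffeomorphic to $\R^n$ through the exponential map at any basepoint, which is consistent with, though not logically needed for, the statement.

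I do not anticipate any genuine obstacle, since the corollary only collects the earlier results; the sole point deserving a word of care is that the required topological hypothesis is simple connectedness (equivalently, here, contractibility of the quadrant), which is elementary. The practical consequence advertised in the abstract then follows from the classical fact that on a Hadamard manifold the Fréchet mean of any compactly supported probability measure --- in particular of any finite set of points --- exists and is unique, so that a finite family of Dirichlet distributions has a well-defined Fréchet mean for the Fisher--Rao metric.
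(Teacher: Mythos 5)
Your proposal is correct and follows exactly the paper's route: the corollary is obtained by combining simple connectedness of the quadrant $M=(\R_+^*)^n$ (immediate since it is convex) with Theorem \ref{negseccurvthm} (negative sectional curvature) and Theorem \ref{geodcompletethm} (geodesic completeness). The extra remarks on contractibility and the Cartan--Hadamard theorem are fine but not needed beyond what the paper itself invokes.
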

This has important implications in information geometry, as it guarantees the uniqueness of the Fréchet mean of a set of points in this geometry. The Fréchet mean, also called intrinsic mean, is a popular choice to extend the notion of barycenter to a Riemannian manifold. It is defined for a set of points $p_1,\hdots, p_N\in M$ as the minimizer of the sum of the squared geodesic distances to the points of the set
\begin{equation*}
\bar p = \underset{p\in M}{\mathrm{argmin}}\sum_{i=1}^Nd(p, p_i)^2.
\end{equation*}
It exists as long as $M$ is complete, however it is in general not unique and refers to a set. Uniqueness holds however for Hadamard manifolds \cite{karcher1977}. This implies that the notion of barycenter of Dirichlet distributions is well defined in the Fisher-Rao geometry.

\section{The two-dimensional case of beta distributions}\label{sec:beta}

The simplest case is obviously when $n=2$, and even in this case the formulas are nontrivial. When $f=\frac{1}{\psi'}$, the metric comes from the
well-known two-parametric family of beta distributions defined on the compact interval $[0,1]$, which is important in statistics and useful in many applications.

\begin{proposition}
The geodesic equations are given by
\begin{equation}\label{geodeq}
\begin{aligned}
a(x,y)\ddot x + b(x,y) \dot x^2 + c(x,y)\dot x\dot y + d(x,y)\dot y^2 = 0,\\
a(y,x)\ddot y + b(y,x) \dot y^2 + c(y,x)\dot x\dot y + d(y,x)\dot x^2 = 0,
\end{aligned}
\end{equation}
where
\begin{align*}
a(x,y) &= 2\big[ f(x+y) - f(x)-f(y)\big] \\
b(x,y) &= f(y) g(x) + f(x) g(x+y) - f(x+y)g(x)  \\
c(x,y) &= 2f(x) g(x+y) \\
d(x,y) &= f(x)g(x+y) - g(y)f(x),
\end{align*}
with the shorthand $g(x) = f'(x)/f(x)$.
\end{proposition}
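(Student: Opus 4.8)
The plan is to specialize the general metric \eqref{basicmetric} to $n=2$, write out the geodesic equations explicitly, and read off the coefficients. Concretely, with coordinates $(x,y)$ and $t = x+y$, the metric matrix is
\[
g = \begin{pmatrix} \tfrac{1}{f(x)} - \tfrac{1}{f(t)} & -\tfrac{1}{f(t)} \\[4pt] -\tfrac{1}{f(t)} & \tfrac{1}{f(y)} - \tfrac{1}{f(t)} \end{pmatrix},
\]
and the geodesic equations are $\ddot x^k + \sum_{i,j}\Gamma^k_{ij}\dot x^i \dot x^j = 0$. Rather than invoking the general Christoffel formula of Proposition \ref{prop:christoffel} directly (which would require care with the index bookkeeping), I would use the first-kind symbols
\[
\Gamma_{ijk} = \tfrac12\left(\tfrac{f'(t)}{f(t)^2} - \tfrac{f'(x_k)}{f(x_k)^2}\delta_{ik}\delta_{jk}\right)
\]
from that proof, which in the $n=2$ case gives a handful of explicit scalars, and then multiply the vector equations $\sum_{i,j}\Gamma_{ij\ell}\dot x^i\dot x^j = 0$ (i.e.\ the geodesic equations in ``lowered'' form $g_{k\ell}(\ddot x^k + \cdots)$, equivalently $\ddot x_\ell$-equations contracted with $g$) through.

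The cleanest route is to avoid inverting $g$ altogether: the Euler–Lagrange equations for the energy $\tfrac12 g_{ij}\dot x^i\dot x^j$ read, for the $x$-component,
\[
\frac{d}{du}\!\left(\Big(\tfrac{1}{f(x)}-\tfrac{1}{f(t)}\Big)\dot x - \tfrac{1}{f(t)}\dot y\right) = -\tfrac12\,\partial_x\!\left(g_{ij}\dot x^i\dot x^j\right),
\]
and symmetrically for $y$. Expanding the left side with $\dot t = \dot x + \dot y$ and $\frac{d}{du}\tfrac{1}{f(x)} = -\tfrac{f'(x)}{f(x)^2}\dot x$, and expanding the right side using $\partial_x t = 1$, produces a relation of the form $\tfrac{1}{f(x)}\ddot x - \tfrac{1}{f(t)}\ddot t + (\text{quadratic in }\dot x,\dot y) = 0$. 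Multiplying through by $f(x)f(t)$ clears denominators and yields $\big(f(t)-f(x)\big)\ddot x - f(x)\ddot y + (\cdots) = 0$; one then eliminates $\ddot y$ between this equation and its $y$-counterpart. Since the system is $2\times 2$, this elimination is a single substitution, and it is what produces the factor $a(x,y) = 2\big(f(x+y)-f(x)-f(y)\big)$ multiplying $\ddot x$ — exactly the quantity that Lemma \ref{fconsequences} guarantees is positive, so the leading coefficient never degenerates. The coefficient of $\dot x\dot y$ picking up the symmetric term $2f(x)g(x+y)$, and the asymmetry between the $\dot x^2$ and $\dot y^2$ coefficients (which is why $b,d$ are not symmetric in their arguments), both fall out of this bookkeeping.

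The main obstacle is purely organizational: keeping the $t$-derivatives and the Kronecker-delta terms straight so that the quadratic coefficients collect correctly into the stated $b, c, d$, and verifying that after clearing denominators by $f(x)f(y)f(x+y)$ and eliminating $\ddot y$ the common factor that remains is exactly $2\big(f(x+y)-f(x)-f(y)\big)$ rather than something larger. I expect no genuine mathematical difficulty — every step is a determined computation — but I would double-check the final coefficients by an independent consistency test: setting $f \equiv$ const (formally) should collapse the metric to a degenerate case, and evaluating the expressions along a curve with $\dot y \equiv 0$ should reproduce the single-variable geodesic equation for $ds^2 = dx^2/f(x) - (\cdots)$ restricted appropriately. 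A second check is symmetry under $x \leftrightarrow y$: swapping the two coordinates must carry the first equation of \eqref{geodeq} to the second, which is precisely the content of writing the coefficients as $a(y,x), b(y,x), c(y,x), d(y,x)$, so any sign or argument error would show up immediately there.
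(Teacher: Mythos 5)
Your strategy is correct and would yield the stated equations, but it is organized differently from the paper: the paper's proof is a two-line specialization, writing $\ddot x + \Gamma^1_{11}\dot x^2 + 2\Gamma^1_{12}\dot x\dot y + \Gamma^1_{22}\dot y^2 = 0$ (and its $y$-analogue) and reading the coefficients off the general formula of Proposition \ref{prop:christoffel}, where the inverse metric has already been computed once and for all via Sherman--Morrison; multiplying that equation by $2\big(f(t)-f(x)-f(y)\big)$ gives exactly $a,b,c,d$. You instead rederive everything in the $n=2$ case from the Euler--Lagrange equations in ``lowered'' form and eliminate $\ddot y$ by hand, which amounts to re-inverting the $2\times 2$ metric on the spot. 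That is a perfectly sound, self-contained alternative (and it makes visible why the nondegenerate leading coefficient is the superadditivity defect $f(t)-f(x)-f(y)$, with $\det g = \frac{f(t)-f(x)-f(y)}{f(x)f(y)f(t)}>0$), but note two points you flagged only implicitly: the elimination as you set it up produces an overall extra common factor $f(t)$ (multiplying the cleared $x$-equation by $f(t)-f(y)$ and the $y$-equation by $f(x)$ gives $f(t)\big[f(t)-f(x)-f(y)\big]\ddot x+\cdots$), which must be seen to divide all the quadratic terms as well, and the factor $2$ in $a$ is just a normalization of the whole equation. Your proposed sanity checks are the weakest part: setting $f\equiv\mathrm{const}$ violates the standing assumptions and degenerates the metric, and a curve with $\dot y\equiv 0$ is not in general a geodesic, so neither test is probative; the $x\leftrightarrow y$ symmetry check, by contrast, is exactly right and is the check the paper's formulation builds in.
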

\begin{proof}
The geodesic equations can be expressed in terms of the Christoffel symbols as
\begin{align*}
\ddot x + \Gamma_{11}^1 \dot x^2 + 2\Gamma_{12}^1\dot x\dot y + \Gamma_{22}^1\dot y^2 = 0,\\
\ddot y + \Gamma_{22}^2 \dot y^2 + 2\Gamma_{12}^2\dot x\dot y + \Gamma_{11}^2\dot x^2 = 0,
\end{align*}
and the coefficients can be computed using Proposition \ref{prop:christoffel}. 
\end{proof}

\begin{figure}
\centering
\includegraphics[width=13em, height=12em]{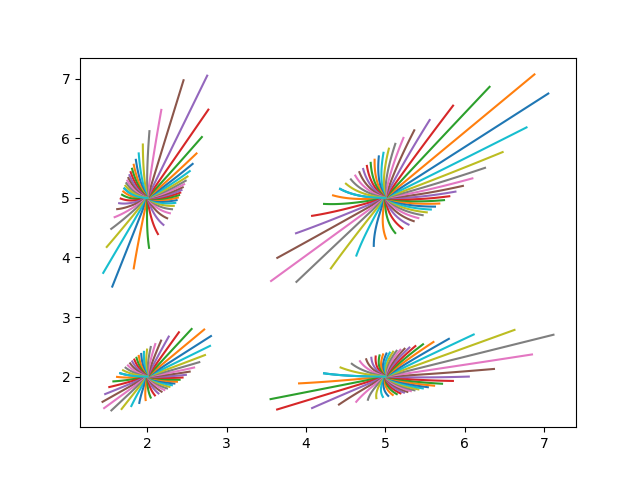}
\includegraphics[width=14em, height=12em]{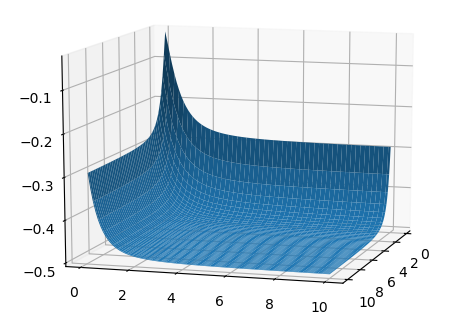}
\caption{On the left, geodesic balls and on the right, sectional curvature of the manifold of beta distributions ($n=2$).}
\label{fig:dim2}
\end{figure}

\begin{figure}
\centering
\includegraphics[width=0.35\textwidth]{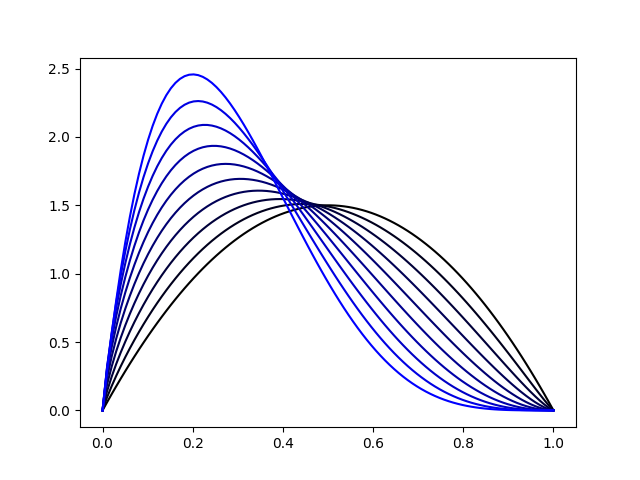}
\includegraphics[width=0.35\textwidth]{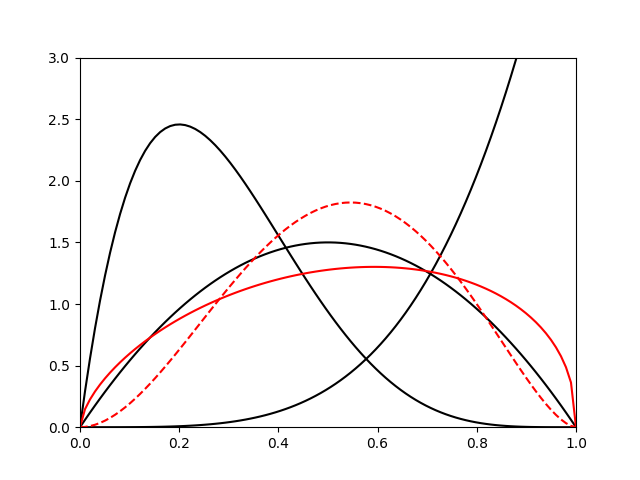}
\caption{On the left, geodesic between the beta distributions of parameters $(2,5)$ and $(2,2)$ and on the right, Fréchet mean (full red line) compared to the Euclidean mean (dashed red line) of the beta distributions of parameters $(2,5)$, $(2,2)$ and $(5,1)$, shown in terms of probability density function.}
\label{fig:dim2dens}
\end{figure}

No closed form is known for the geodesics, but they can be computed  numerically by solving \eqref{geodeq}, see the left-hand side of Figure \ref{fig:dim2}. Nonetheless we can notice that, due to the symmetry of the metric with respect to parameters $x$ and $y$, both equations in \eqref{geodeq} yield a unique ordinary differential equation when $x=y$.

\begin{corollary}
Solutions of the geodesic equation \eqref{geodeq} with $x(0)=y(0)$ and $\dot{x}(0)=\dot{y}(0)$ satisfy 
\begin{equation}\label{diagconserv}
 \sqrt{q(x(t))} \dot{x}(t) = \text{constant},
 \end{equation}
where $q(x) = \frac{1}{f(x)} - \frac{2}{f(2x)}$, 
and thus can be found by quadratures. 
\end{corollary}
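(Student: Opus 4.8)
The plan is to exploit the symmetry of the metric \eqref{basicmetric} under the transposition $\sigma\colon(x,y)\mapsto(y,x)$, which is an isometry of $M$ whose fixed-point set is the diagonal $\Delta=\{x=y\}$. First I would argue that a geodesic $t\mapsto(x(t),y(t))$ with $x(0)=y(0)$ and $\dot x(0)=\dot y(0)$ stays on $\Delta$: the curve $t\mapsto\sigma(x(t),y(t))=(y(t),x(t))$ is again a geodesic, with the same initial position and velocity, so by uniqueness of geodesics $(y(t),x(t))=(x(t),y(t))$, i.e. $x(t)=y(t)$ for all $t$. (Equivalently, one checks directly from \eqref{geodeq} that the two equations coincide on $x=y$, as already noted above.) Hence $\Delta$ is a totally geodesic one-dimensional submanifold of $M$.

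Next I would compute the metric induced on $\Delta$. Setting $x_1=x_2=x$ and $dx_1=dx_2=dx$ in \eqref{basicmetric} gives
\[
ds^2\big|_{\Delta}=\Big(\frac{2}{f(x)}-\frac{4}{f(2x)}\Big)\,dx^2=2\,q(x)\,dx^2,\qquad q(x)=\frac{1}{f(x)}-\frac{2}{f(2x)}.
\]
Note that $q(x)>0$ for $x>0$, since $f(2x)>2f(x)$ by the superadditivity \eqref{fsuperadditive}, so this is a genuine Riemannian metric on $\Delta$. A curve that is a geodesic of $M$ and lies in $\Delta$ is a geodesic of $\big(\Delta,2q(x)\,dx^2\big)$, and since this one-dimensional metric carries no explicit $t$-dependence the energy $\tfrac12\cdot 2q(x)\dot x^2=q(x)\dot x^2$ is constant along it. Therefore $\sqrt{q(x(t))}\,\dot x(t)$ is constant, which is exactly \eqref{diagconserv}. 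To obtain the solution by quadratures, write $C=\sqrt{q(x)}\,\dot x$; if $C=0$ then $x$ is constant, and otherwise $\dot x$ never vanishes and keeps a constant sign, so separation of variables gives $\int^{x(t)}\sqrt{q(s)}\,ds=Ct+\text{const}$, which determines $x(t)$ implicitly, and then $y(t)=x(t)$.

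I expect the only real subtlety to be the first step — making precise that the symmetric initial data force the whole geodesic onto the diagonal — after which the statement is just the standard conservation of energy for a one-dimensional metric, together with the positivity of $q$ coming from superadditivity. One can alternatively verify \eqref{diagconserv} by brute force: specializing \eqref{geodeq} to $x=y$, $\dot x=\dot y$ collapses the two equations to
\[
\ddot x=-\frac{4f(x)g(2x)-f(2x)g(x)}{2\big(f(2x)-2f(x)\big)}\,\dot x^2 ,
\]
with $g(x)=f'(x)/f(x)$ as above, and then $\tfrac{d}{dt}\big(q(x)\dot x^2\big)=0$ reduces to the algebraic identity $\dfrac{q'(x)}{2q(x)}=\dfrac{4f(x)g(2x)-f(2x)g(x)}{2\big(f(2x)-2f(x)\big)}$, which follows by logarithmic differentiation of $q(x)=\dfrac{f(2x)-2f(x)}{f(x)f(2x)}$; but the geometric argument via the totally geodesic diagonal is cleaner and coordinate-free.
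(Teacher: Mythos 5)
Your proof is correct, and it differs from the paper's at both key steps. For confining the geodesic to the diagonal, the paper argues from the ODEs \eqref{geodeq}: it shows $\ddot x=\ddot y$ at $t_0$, differentiates repeatedly, and invokes analyticity of the solutions, whereas you use the transposition $(x,y)\mapsto(y,x)$, which is visibly an isometry of \eqref{basicmetric}, together with uniqueness of geodesics with given initial data. Your argument is cleaner and in fact more robust: analyticity of the solutions is clear when $f=1/\psi'$ but is not guaranteed under the bare assumptions \eqref{fassumptions}, while the symmetry argument needs nothing beyond the standard uniqueness theorem. For the conservation law itself, the paper specializes \eqref{geodeq} to $x=y$ and recognizes the resulting equation as $2q(x)\ddot x+q'(x)\dot x^2=0$; you instead observe that the restriction of \eqref{basicmetric} to the diagonal is $2q(x)\,dx^2$ and that a geodesic of $M$ lying in the diagonal has constant speed there, which gives \eqref{diagconserv} without any computation (your brute-force verification, including the identity obtained by differentiating $q=(f(2x)-2f(x))/(f(x)f(2x))$, is also correct and reproduces the paper's reduction). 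You also make explicit a point the paper leaves implicit, namely $q>0$ via the superadditivity \eqref{fsuperadditive}, which is what makes $\sqrt{q}$ meaningful and the quadrature
$\int^{x}\sqrt{q(s)}\,ds$ monotone. The only thing the paper's route buys that yours does not automatically give is the explicit diagonal ODE, but you recover that as well in your alternative check, so nothing is lost.
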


\begin{proof}
If at some time $t_0$ we have $x=y$ and $\dot{x}=\dot{y}$, then the equations \eqref{geodeq} imply that $\ddot{x}=\ddot{y}$ at $t_0$. Differentiating repeatedly in time shows that all higher derivatives must also be equal at $t_0$, and we conclude by analyticity of the solutions that $x(t)=y(t)$ on some interval. The usual extension arguments for ODEs then imply that $x(t)=y(t)$ on the entire domain of the solution, which by Theorem \ref{geodcompletethm} is $\mathbb{R}$.

When $x=y$ equation \eqref{geodeq} reduces to  
$$ 2 \big[ f(2x) - 2f(x)\big] \ddot{x} + \left(\frac{4f(x)f'(2x)}{f(2x)} - \frac{f(2x) f'(x)}{f(x)} \right) \dot{x}^2 = 0,$$
which is equivalent to  
$$ 2 q(x) \ddot{x} + q'(x) \dot{x}^2 = 0.$$
This clearly implies the conservation law \eqref{diagconserv}. 
The differential equation \eqref{diagconserv} can then be solved by writing 
$$t = \frac{1}{\dot{x}_0 \sqrt{q(x_0)}} \, \int_{x_0}^x \sqrt{q(s)}\, ds$$
and inverting the resulting function.
\end{proof} 

For example, if $f(x)=1/\psi'(x)$, then the duplication formula for the trigamma function implies 
$$ q(x) = \psi'(x) - 2\psi'(2x) = \tfrac{1}{2} [\psi'(x) - \psi'(x+\tfrac{1}{2})].$$
Asymptotically this looks like $q(x)\approx \frac{1}{2x^2}$ for $x\approx 0$ and $q(x)\approx \frac{1}{4x^2}$ as $x\to \infty$. 
We conclude that it takes infinite time for a geodesic along the diagonal to either reach ``diagonal infinity'' or the origin, 
as Theorem \ref{geodcompletethm} of course implies. 

From an applications point of view, the geodesics for the Fisher-Rao geometry allow us to define a notion of optimal interpolation between beta and more generally Dirichlet distributions. An example of such an optimal interpolation is shown on the left-hand side of Figure \ref{fig:dim2dens}$,$ in terms of probability density function.

Now we give the formula for the sectional curvature in two dimensions.

\begin{proposition}\label{gaussiancurvprop}
If $n=2$, the sectional curvature is given by
\begin{equation}\label{gaussian}
K(x,y) = -\frac{1}{4} \, \frac{f(t)f'(x)f'(y) - f(x)f'(t)f'(y)-f(y)f'(t)f'(x)}{\big[ f(t)-f(x)-f(y)\big]^2},
\end{equation}
where $t=x+y$.
\end{proposition}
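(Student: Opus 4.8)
The plan is to obtain \eqref{gaussian} as the specialization of Proposition \ref{seccurv} to $n=2$. In dimension two the tangent space $T_yS$ is two-dimensional and, by Proposition \ref{positivemetric}, is spanned by the pair $e_1,e_2$ of \eqref{tangentvecs}; hence there is a single sectional curvature at each point, equal to $K(e_1,e_2)$ irrespective of the chosen basis. So it suffices to read off the $n=2$ instance of the formula in Proposition \ref{seccurv} with $i=1$, $j=2$, and rename $x_1=x$, $x_2=y$, $t=x+y$.

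The only simplification to carry out is in the denominator: when $n=2$ the sum $\sum_{\ell=1}^2 f(x_\ell)=f(x)+f(y)$ coincides with $f(x_i)+f(x_j)$, so the two \emph{a priori} distinct factors $f(t)-\sum_\ell f(x_\ell)$ and $f(t)-f(x_i)-f(x_j)$ appearing in Proposition \ref{seccurv} become equal, and their product is $\bigl[f(t)-f(x)-f(y)\bigr]^2$, exactly as in \eqref{gaussian}. For the numerator I would factor $-1$ out of $f(x)f'(y)f'(t)+f'(x)f(y)f'(t)-f'(x)f'(y)f(t)$ to rewrite it as $-\bigl(f(t)f'(x)f'(y)-f(x)f'(t)f'(y)-f(y)f'(t)f'(x)\bigr)$, which together with the overall factor $\tfrac14$ produces \eqref{gaussian}.

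Alternatively one could bypass Proposition \ref{seccurv} and plug $U=e_1$, $V=e_2$ directly into the Gauss equation \eqref{gausscodazzi}, using $\langle e_i,e_i\rangle=1-f(x_i)/f(t)$ and $\langle e_1,e_2\rangle=\sqrt{f(x_1)f(x_2)}/f(t)$ together with the shape-operator entries of Proposition \ref{secondff}. The Gram determinant in the denominator then simplifies to $1-\tfrac{f(x)+f(y)}{f(t)}=\tfrac{f(t)-f(x)-f(y)}{f(t)}$, while the $2\times 2$ determinant of $\Sigma$, after pulling out the common scalar $\tfrac{1}{2\sqrt{f(t)-f(x)-f(y)}}$ from each entry, expands to the same numerator; the stray factors of $f(t)$ cancel between the two determinants. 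Either route is routine computation. The substantive point — and the only place to be attentive — is the sign bookkeeping in front together with the observation, special to $n=2$, that the two curvature-denominator factors of Proposition \ref{seccurv} collapse into a single square; this is less an obstacle than the reason the two-dimensional formula takes the clean form \eqref{gaussian}.
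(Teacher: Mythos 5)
Your proposal is correct and takes essentially the same route as the paper, whose entire proof is the observation that \eqref{gaussian} is the $n=2$ case of Proposition \ref{seccurv}. Your extra bookkeeping — noting that the two denominator factors collapse to $\bigl[f(t)-f(x)-f(y)\bigr]^2$ when $n=2$, rearranging the sign of the numerator, and remarking that in dimension two there is a single sectional curvature so the choice of basis $e_1,e_2$ is immaterial — is exactly what the paper leaves implicit.
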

\begin{proof}
This is just a particular case of Proposition \ref{seccurv}.
\end{proof}

Notice that in two dimensions, the negativity of the sectional curvature is straightforward, as there is only one Gaussian curvature to consider, which is given by \eqref{gaussian}, in which one can easily see that the numerator is positive by factorizing by $f'(x)f'(y)f'(t)>0$ and using the superadditivity property \eqref{ffprimesuperadditive} of $f/f'$.

As previously mentioned, the negative curvature of the Fisher-Rao geometry also has interesting implications for applications: it entails that the Fréchet mean of a set of beta, or more generally Dirichlet distributions is well defined. An example of Fréchet mean of beta distributions is shown in terms of probability density function on the right-hand side of Figure \ref{fig:dim2dens}.

Numerically we observe that when $f=1/\psi'$, the function $K(x,y)$ given by \eqref{gaussian} is decreasing in both the $x$ and $y$ variables -- see the right-hand side of Figure \ref{fig:dim2} -- but we do not yet have a proof of this fact. However we may analyze the asymptotics of the function relatively easily.

\begin{figure}
    \centering
    \includegraphics[width=0.3\linewidth]{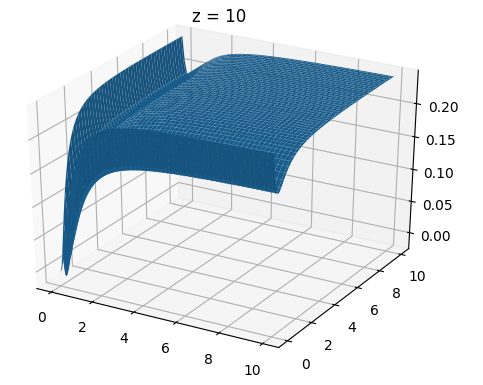}
    \includegraphics[width=0.3\linewidth]{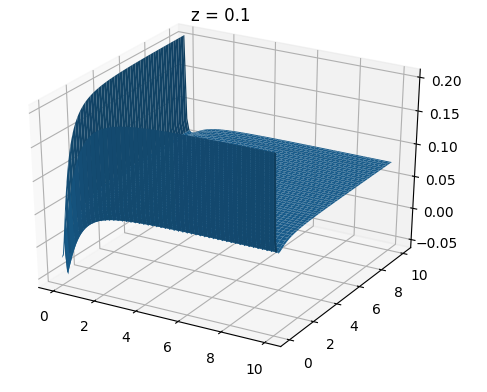}
    \includegraphics[width=0.3\linewidth]{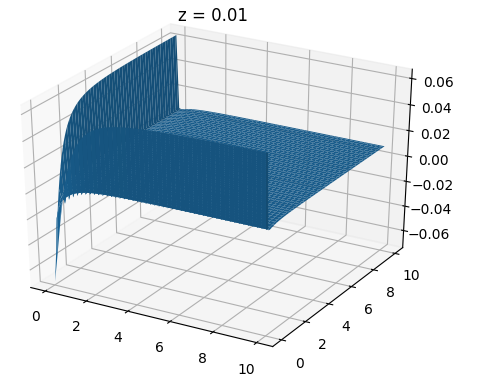}
    \caption{The difference \eqref{diffseccurv} between the sectional curvatures of the plane generated by $e_1$ and $e_2$ in two and three dimensions changes sign for $z=0.01$.}
    \label{fig:dim2dim3}
\end{figure}

\begin{proposition}
\label{prop:asymptotic}
If $f=1/\psi'$, then the asymptotic behavior of the sectional curvature given by \eqref{gaussian} approaching the boundary square is given by
\begin{align}
\lim_{y\to 0}K(x, y) &= \lim_{y\to 0}K(y, x) = \frac{3}{4} - \frac{\psi'(x)\psi'''(x)}{2\, \psi''(x)^2}, \label{zeroasymptote} \\
\lim_{y\to \infty} K(x, y) &= \lim_{y\to \infty} K(y, x) = \frac{x\, \psi''(x) + \psi'(x)}{4(x\,\psi'(x) - 1)^2}.\label{infiniteasymptote}
\end{align}
Moreover, we have the following limits at the asymptotic corners:
\begin{align*}
\lim_{x,y\to 0} K(x,y) = 0, \quad \lim_{x,y\to \infty}K(x,y) = -\frac{1}{2},\quad
\lim_{x\to 0, y\to \infty} K(x, y)=\lim_{x\to \infty, y\to 0} K(x, y) = -\frac{1}{4}.
\end{align*}
\end{proposition}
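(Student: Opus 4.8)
The plan is to start from the closed formula \eqref{gaussian} for $K(x,y)$ with $f = 1/\psi'$, and to translate everything into the trigamma function and its derivatives. Since $f = 1/\psi'$, we have $f' = -\psi''/(\psi')^2$, so that $f'/f = -\psi''/\psi'$, and $f(t) - f(x) - f(y) = 1/\psi'(t) - 1/\psi'(x) - 1/\psi'(y)$. The cleanest route is to factor $f'(x)f'(y)f'(t)$ out of the numerator of \eqref{gaussian} — as already noted in the paragraph preceding Proposition~\ref{gaussiancurvprop} — so that
$$K(x,y) = -\frac14\,\frac{f'(x)f'(y)f'(t)\left[\dfrac{f(t)}{f'(t)} - \dfrac{f(x)}{f'(x)} - \dfrac{f(y)}{f'(y)}\right]}{\big[f(t)-f(x)-f(y)\big]^2},$$
and then substitute $f/f' = -\psi'/\psi''$ and the expressions above. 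This gives a manifestly finite expression in terms of $\psi',\psi'',\psi'''$ that one can take limits of directly.

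First I would handle \eqref{zeroasymptote}. As $y\to 0$, $t = x+y \to x$, and the key asymptotics are $\psi'(y) \sim y^{-2}$, $\psi''(y)\sim -2y^{-3}$, $f(y) \sim y^2$, $f'(y)\sim 2y$, $f(y)/f'(y)\sim y/2 \to 0$. The denominator $[f(t)-f(x)-f(y)]^2$ tends to $[f(x+y)-f(x)]^2 - \cdots$; more carefully, $f(x+y)-f(x)-f(y) \sim y f'(x) + \tfrac12 y^2 f''(x) - y^2 + o(y^2)$, and since $f'(x) = O(x)$ near zero but generically nonzero, the leading term is $y f'(x)$, so the denominator is $\sim y^2 f'(x)^2$. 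In the numerator, $f'(y)\sim 2y$ contributes a factor $y$, and the bracket $\frac{f(t)}{f'(t)} - \frac{f(x)}{f'(x)} - \frac{f(y)}{f'(y)}$ needs to be expanded to order $y$: writing $h = f/f'$, it is $h(x+y) - h(x) - h(y) \sim y h'(x) - \tfrac{y}{2} + o(y)$ (using $h(y)\sim y/2$), so the bracket is $\sim y\,(h'(x) - \tfrac12)$. Collecting, the $y$ powers cancel and one is left with $-\tfrac14 \cdot \frac{f'(x)f'(x)(h'(x)-1/2)}{f'(x)^2} = -\tfrac14(h'(x) - \tfrac12)$ where $h = f/f' = -\psi'/\psi''$. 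Computing $h' = -1 + \psi'\psi'''/(\psi'')^2$ and simplifying yields $-\tfrac14(-1 + \psi'\psi'''/(\psi'')^2 - \tfrac12) = \tfrac38 \cdot 2 - \cdots$; one checks this collapses to $\tfrac34 - \tfrac{\psi'(x)\psi'''(x)}{2\psi''(x)^2}$. The arithmetic here is the one place to be careful about factors, but it is routine.

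For \eqref{infiniteasymptote}, as $y\to\infty$ we have $t = x+y\to\infty$, and the relevant asymptotics are $f(y)\sim y$, $f'(y)\to 1$, $f(y)/f'(y)\sim y$, with $f(t)\sim y$, $f'(t)\to 1$ similarly. The denominator $f(t)-f(x)-f(y) = f(x+y) - f(y) - f(x) \to \lim_{y\to\infty}[f(x+y)-f(y)] - f(x)$; since $f(s) = s + c + o(1)$ as $s\to\infty$ for a constant $c$ (from $f = 1/\psi' \sim x - 1/2 + \cdots$, indeed $c = -1/2$ because $\psi'(x) \sim 1/x + 1/(2x^2)$), one gets $f(x+y) - f(y) \to x$, so the denominator $\to x - f(x) = x - 1/\psi'(x) = (x\psi'(x)-1)/\psi'(x)$. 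For the numerator of \eqref{gaussian} directly: $f(t)f'(x)f'(y) \to x\,f'(x)\cdot 1$... wait, $f(t)\to\infty$, so I would instead keep the factored form and note $f'(x)f'(y)f'(t) \to f'(x)$, while the bracket $h(t) - h(x) - h(y)$ with $h = f/f'$: since $h(s) = f(s)/f'(s) \sim s$ at infinity, $h(t) - h(y) \to x$ and so the bracket $\to x - h(x) = x + \psi'(x)/\psi''(x)$. Assembling: $K \to -\tfrac14 f'(x)(x - h(x))/(x-f(x))^2$, and substituting $f'(x) = -\psi''/(\psi')^2$, $h(x) = -\psi'/\psi''$, $x - f(x) = (x\psi'-1)/\psi'$ gives, after simplification, $\frac{x\psi''(x)+\psi'(x)}{4(x\psi'(x)-1)^2}$, as claimed. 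The corner limits then follow by composing: feed the known asymptotics of $\psi',\psi'',\psi'''$ at $0$ and $\infty$ into \eqref{zeroasymptote} and \eqref{infiniteasymptote} and read off the constants.

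The main obstacle is bookkeeping rather than conceptual: one must expand the numerator and denominator of \eqref{gaussian} to the correct order in the vanishing variable so that the leading powers cancel exactly, and one must invoke the standard asymptotic expansions $\psi'(x) = x^{-1} + \tfrac12 x^{-2} + \tfrac16 x^{-3} + O(x^{-4})$ as $x\to\infty$ and $\psi'(x) = x^{-2} + \psi'(x+1)$ hence $\psi'(x)\sim x^{-2}$, $\psi''(x)\sim -2x^{-3}$, $\psi'''(x)\sim 6 x^{-4}$ as $x\to 0$. I would organize the write-up by first proving the two edge asymptotics \eqref{zeroasymptote} and \eqref{infiniteasymptote} via the factored form of $K$, and then deriving the four corner values as limits of these one-variable functions, double-checking each against the direct substitution into \eqref{gaussian} with both variables going to the corner simultaneously (the limits should agree since $K$ extends continuously to the corners, which is plausible but worth the consistency check).
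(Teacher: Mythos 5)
Your proposal is correct and follows essentially the same route as the paper: both arguments come down to asymptotic expansion of the numerator and denominator of \eqref{gaussian}, using $f(y)\sim y^2$, $f'(y)\sim 2y$ as $y\to 0$ (the paper organizes this through the Taylor coefficients $A_{yy}(x,0)$ and $B_y(x,0)$ of the numerator and denominator) and $f(y)-y\to-\tfrac12$, $f'(y)\to 1$, $y(f'(y)-1)\to 0$ as $y\to\infty$, with the corner values read off from the same elementary asymptotics of $\psi'$. The one factor you flagged does need fixing: keeping the $2$ from $f'(y)\sim 2y$ when collecting terms gives the edge limit $\tfrac14-\tfrac12 h'(x)$ rather than $-\tfrac14\bigl(h'(x)-\tfrac12\bigr)$, and since $h'=-1+\psi'\psi'''/(\psi'')^2$ this is exactly \eqref{zeroasymptote}.
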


\begin{proof}
Writing $K(x,y) = -\frac{A(x,y)}{4B(x,y)^2}$, with 
\begin{align*}
A(x,y) &= f(x+y)f'(x)f'(y) - f(x)f'(x+y)f'(y) - f(y)f'(x+y)f'(x), \\
B(x,y) &= f(x+y)-f(x)-f(y),
\end{align*}
we note that $A(x,0)=M_y(x,0)=0$ and $N(x,0)=0$, so that $$\displaystyle \lim_{y\to 0} K(x,y) = \frac{A_{yy}(x,0)}{8B_y(x,0)^2},$$
which gives \eqref{zeroasymptote} after rewriting in terms of $\psi'$. 

For the infinite limits, we use the facts that $\displaystyle \lim_{y\to\infty} f(y)-y = -\tfrac{1}{2}$ and $\lim_{y\to\infty} f'(y)=1$, and that $\lim_{y\to\infty} y(f'(y)-1) = 0$, to obtain limits of $A(x,y)$ and $B(x,y)$ separately with elementary computations.
\end{proof}

These limits and strong numerical evidence allow us to conjecture that the sectional curvature in two dimensions is lower bounded by $-1/2$.
Comparing the two-dimensional sectional curvature $K_2(x,y)=K(x,y)$ with the sectional curvature of the plane generated by $e_1$ and $e_2$ in three dimensions, that we denote by $K_3(x,y,z)$, we observe numerically that for a given $z>0$, the function
\begin{equation}\label{diffseccurv}
    (x,y)\mapsto K_3(x,y,z)-K_2(x,y)
\end{equation}
does not have a fixed sign in general, as can be observed on Figure \ref{fig:dim2dim3} for small values of $x$, $y$ and $z$.

\section*{Acknowledgments}

S. C. Preston was partially supported by Simons Foundation, Collaboration Grant for Mathematicians, no. 318969. A. Le Brigant and S. Puechmorel would like to thank Fabrice Gamboa and Thierry Klein for bringing this problem to their attention and for fruitful discussions.

\section*{Appendix}
Here we give a well-known principle to establish positivity of matrices.

\begin{lemma}\label{positivetransposelemma}
Suppose $A$ is a positive-definite symmetric matrix, $V$ is a vector, and $c$ is a positive real number.
Then $B = A - c VV^T$ is positive-definite if and only if
$$c V^TA^{-1}V < 1.$$
\end{lemma}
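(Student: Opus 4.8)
The final statement is Lemma~\ref{positivetransposelemma}: for a positive-definite symmetric matrix $A$, a vector $V$, and a positive constant $c$, the matrix $B = A - cVV^T$ is positive-definite if and only if $cV^TA^{-1}V < 1$.

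The plan is to diagonalize the problem by the standard substitution that factors out the positive-definite part. Since $A$ is positive-definite symmetric, it has a symmetric positive-definite square root $A^{1/2}$, which is invertible. For any nonzero vector $u$, write $u = A^{-1/2}w$ with $w \neq 0$; then
\begin{equation*}
u^T B u = w^T A^{-1/2}\bigl(A - cVV^T\bigr)A^{-1/2} w = w^T\bigl(I - c(A^{-1/2}V)(A^{-1/2}V)^T\bigr)w = \|w\|^2 - c\langle A^{-1/2}V, w\rangle^2.
\end{equation*}
So $B$ is positive-definite if and only if $I - cWW^T$ is positive-definite, where $W = A^{-1/2}V$. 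This reduces the whole lemma to the rank-one case, which is elementary: $I - cWW^T$ has eigenvalue $1$ on the orthogonal complement of $W$ and eigenvalue $1 - c\|W\|^2$ on the span of $W$, so it is positive-definite if and only if $c\|W\|^2 < 1$. Finally $\|W\|^2 = W^TW = V^TA^{-1/2}A^{-1/2}V = V^TA^{-1}V$, giving the stated condition.

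The only mild subtlety is making sure the reduction is an honest equivalence in both directions: the map $u \mapsto w = A^{1/2}u$ is a linear bijection, and $u^T B u$ and $w^T(I - cWW^T)w$ agree under it, so positivity (respectively non-positivity somewhere) of one quadratic form transfers to the other. There is no real obstacle here — the argument is a routine spectral computation once one commits to the $A^{1/2}$ substitution. One could alternatively phrase the rank-one step via the Sherman--Morrison determinant identity $\det(I - cWW^T) = 1 - c\|W\|^2$ together with the fact that the other eigenvalues are all $1$, but the direct eigenvector description is cleaner and needs no determinant bookkeeping. If one wishes to avoid square roots entirely, the same reduction works with any factorization $A = R^TR$ with $R$ invertible (e.g. Cholesky), setting $W = (R^{-1})^T V$ and using $V^TA^{-1}V = \|W\|^2$; I would keep the symmetric square root since it is the most transparent.
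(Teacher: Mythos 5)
Your proof is correct and follows essentially the same route as the paper: both factor through the symmetric square root of $A$ to reduce the statement to positive-definiteness of $I - cWW^T$ with $W = A^{-1/2}V$. The only difference is the final rank-one step, where you read off the eigenvalues ($1$ on $W^\perp$ and $1 - c\lVert W\rVert^2$ on the span of $W$) while the paper uses a Cauchy--Schwarz estimate; both are valid and equally short.
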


\begin{proof}
Since $A$ is positive-definite and symmetric, we may write $A = P^2$ for some positive-definite symmetric matrix $P$.
Let $X = P^{-1}V$; then we may write
$$ B = P^2 - cVV^T = P\big(I - c(P^{-1}V) (P^{-1}V)^T \big)P = P (I - cXX^T) P.$$
Denoting by $\langle U|U \rangle=U^TU$ the usual scalar product on $\R^n$, we have for any vector $U$,
\begin{align*}
\langle U|BU\rangle &= \langle PU|PU\rangle - c \langle PU| X\rangle^2 = \lvert Y\rvert^2 - c\langle Y|X\rangle^2 \\
&\ge \lvert Y\rvert^2 - c\lvert X\rvert^2 \lvert Y\rvert^2 = \lvert Y\rvert^2 (1 - c\lvert X\rvert^2),
\end{align*}
where $Y = PU$, using the Cauchy-Schwarz inequality. This is positive for all $U$ if and only if the right side is positive for all $Y$,
which translates into $c \lvert X\rvert^2 < 1$. Since $\lvert X\rvert^2 = \langle P^{-1}V | P^{-1}V\rangle = \langle V| A^{-1}V\rangle$,
we obtain the claimed result.
\end{proof}

\bibliographystyle{plain}
\bibliography{bibliography}

\end{document}